\begin{document}

\newtheorem{theorem}{Theorem}
\newtheorem{proposition}[theorem]{Proposition}
\newtheorem{corollary}[theorem]{Corollary}
\newtheorem{lemma}[theorem]{Lemma}
\newtheorem{definition}[theorem]{Definition}
\newtheorem{remark}[theorem]{Remark}

\newcommand\C{{\mathbb C}}
\newcommand\Z{{\mathbb Z}}
\newcommand\N{{\mathbb N}}
\newcommand\K{{\mathbb K}}
\newcommand\R{{\mathcal R}}
\newcommand\A{{\mathcal A}}
\newcommand\AV{{\mathcal {AV}}}
\newcommand\F{{\mathcal F}}
\newcommand\V{{\mathcal V}}
\newcommand\LL{{\mathcal L}}
\newcommand\HH{{\mathcal H}}
\newcommand\h{{\mathfrak h}}
\newcommand\n{{\mathfrak n}}
\newcommand\hp{{\h^\prime}}
\newcommand\hpp{\h^{\prime\prime}}
\newcommand\La{\Lambda}
\newcommand\dd[1]{\frac{\partial}{\partial #1}}
\newcommand\Der{\textup{Der}}
\newcommand\Span{\textup{Span}}
\newcommand\Id{\textup{Id}}
\newcommand\ad{{ad}}
\newcommand\End{\textup{End\,}}
\newcommand\Ker{\textup{Ker\,}}
\newcommand\Hom{\textup{Hom}}
\newcommand\gl{\mathfrak{gl}}
\newcommand\pd{\partial}
\newcommand\rpd{{}^*\partial}
\newcommand\p[1]{{\overline{#1}}}
\newcommand\D{\Delta}
\newcommand\AVZ{{ (\A \# \V)_0}}
\newcommand\eps{\epsilon}
\newcommand\hM{\widehat{M}}
\newcommand\hT{\widehat{T}}
\newcommand\DD{{\mathcal D}}
\newcommand\DL{\DD_\Lambda} 
\newcommand\supp{\textup{supp}} 
\newcommand\Ind{\textup{Ind}} 
\newcommand\Wz{W(m,n) \ltimes \A d_0}
\newcommand\sd{{\lambda_0}}

\newcommand\numberthis{\addtocounter{equation}{1}\tag{\theequation}}

\title
[Harish-Chandra $W(m,n)$-modules]
{Classification of simple strong Harish-Chandra $W(m,n)$-modules}
\author[Y.~Billig]{Yuly Billig}
\address{School of Mathematics and Statistics, Carleton University, Ottawa, Canada}
\email{billig@math.carleton.ca}
\author[v.~Futorny]{Vyacheslav Futorny}
\address{ Instituto de Matem\'atica e Estat\'\i stica,
Universidade de S\~ao Paulo,  S\~ao Paulo,
 Brasil}
 \email{vfutorny@gmail.com}
 \author[K.~Iohara]{Kenji Iohara}
\address{Universit\'e Lyon, Universit\'e Claude Bernard Lyon 1, CNRS UMR 5208, Institut Camille Jordan, 43 Boulevard du 11 Novembre 1918, F-69622 Villeurbanne cedex, France}
 \email{iohara@math.univ-lyon1.fr}
\author[I.~Kashuba]{Iryna Kashuba}
\address{ Instituto de Matem\'atica e Estat\'\i stica,
Universidade de S\~ao Paulo,  S\~ao Paulo,
 Brasil}
 \email{ikashuba@gmail.com}

\let\thefootnote\relax\footnotetext{{\it 2010 Mathematics Subject Classification.}
Primary 17B10, 17B66; Secondary 17B68.}

\begin{abstract}
We classify all simple strong Harish-Chandra modules 
for the Lie superalgebra $W(m,n)$. We show that every such module is either strongly cuspidal 
or a module of the highest weight type. We construct tensor modules for $W(m,n)$, which are 
parametrized by simple finite-dimensional $gl(m,n)$-modules and show that every simple strongly cuspidal
$W(m,n)$-module is a quotient of a tensor module. Finally, we realize modules of the highest weight type
as simple quotients of the generalized Verma modules induced from tensor modules for $W(m-1,n)$.
\end{abstract}

\maketitle

\section{Introduction}
Consider a commutative algebra of Laurent polynomials $\R_m = $ \\
$ \C[t_1^{\pm 1}, \ldots, t_m^{\pm 1}]$ and a supercommutative Grassmann (exterior) algebra $\Lambda = \La(\xi_1, \ldots, \xi_n)$ with odd generators $\xi_1, \ldots, \xi_n$. We set $\A$ to be the commutative superalgebra $\R_m \otimes \La$. Our main object of study in this paper is the Lie superalgebra $\V = W(m,n)$ of derivations of $\A$.
We consider a category of \emph{Harish-Chandra modules}  over $W(m,n)$, which are \emph{weight} modules 
with finite weight multiplicities 
 and its subcategory of 
\emph{strong Harish-Chandra modules} with finite multiplicities of weight subspaces with respect to the part of the Cartan subalgebra corresponding to even variables. The  
strong Harish-Chandra modules are exactly the Harish-Chandra modules in the sense of \cite{MZ}, where the case of the superconformal Lie superalgebra $W(1,n)$ was studied for $n\geq 2$.
Previously, the case of   the Lie algebra of vector fields on a torus $W(m)$ was settled in \cite{BF}.

Crucial information about supports of simple strong Harish-Chandra modules for $W(m,n)$ may be obtained from 
the work of Volodymyr Mazorchuk and Kaiming Zhao \cite{MZh}. Even though \cite{MZh} only treats 
Lie algebra $W(m)$, the results of that paper apply in super setting as well.

We focus on an important class of \emph{cuspidal} strong Harish-Chandra modules, whose weight multiplicities are uniformly bounded. 
The key objects in this class are \emph{tensor} modules which generalize  tensor density modules, also known as the
intermediate series modules, for the Lie algebra of vector fields on a torus \cite{Ma}, \cite{MP2},  \cite{BF}.

Tensor modules belong to the subcategory of $\AV$-modules, they are parametrized by finite-dimensional simple $gl(m,n)$-modules and their supports $\lambda + \Z^m$, $\lambda \in \C^m$. For any $gl(m,n)$-module $V$, the tensor module $T(V, \lambda)=\A \otimes V $ comes with the following action of $\V = W(m,n)$:
\begin{align*}
(t^s f d_j) t^r\otimes g v =& (r_j + \lambda_j) t^{r+s} f g \otimes  v \\
&+  \sum_{i=1}^m s_i t^{r+s} f \otimes e_{ij} g v
 + \sum_{\alpha = 1}^n t^{r+s} (f)\rpd_\alpha \otimes e_{\alpha j} g v , \\
(t^s f \pd_\beta)  t^r \otimes g v =&  t^{r+s} f \frac{\pd g}{\pd \xi_\beta} \otimes v  \\
&+ \sum_{i=1}^m s_i  t^{r+s} f \otimes e_{i\beta} g v
+ \sum_{\alpha = 1}^n  t^{r+s} (f)\rpd_\alpha \otimes e_{\alpha \beta} g v.
\end{align*}   

Tensor modules $T(V, \lambda)$
  exhaust all simple weight modules of finite rank in the category of $\AV$-modules and provide a classification of simple cuspidal strong Harish-Chandra modules over $W(m,n)$.
  
  We also consider a subalgebra $\Wz \subset W(m+1, n)$ and define a tensor module for this subalgebra
as a tensor module for $W(m, n)$ on which $f d_0$ acts as multiplication by $\sd f$, $f \in \A$.

  We formulate our main result

\begin{theorem}
\label{tens-Introd}
Let $m, n$ be non-negative integers. Every simple strong Harish-Chandra $W(m+1, n)$-module is either
\begin{itemize}
\item[(1)] a quotient of a tensor module $T(V, \lambda)$, corresponding to a simple finite-dimensional 
$gl(m+1, n)$-module $V$ and $\lambda \in \C^{m+1}$, or
\item[(2)] a module of a highest weight type $L(T)^\theta$, 
twisted by an automorphism $\theta \in GL_{m+1} (\Z)$,
where $T$ is a simple quotient of a tensor module for $\Wz$.
\end{itemize}
\end{theorem}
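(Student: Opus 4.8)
The plan is to classify simple strong Harish-Chandra modules over $W(m+1,n)$ by analyzing their supports and reducing to a cuspidal/highest-weight dichotomy, exactly paralleling the Lie algebra case of \cite{BF} but accounting for the odd variables. First I would invoke the support theory coming from \cite{MZh}: for a simple strong Harish-Chandra module $M$, the even part of the support (the projection to $\Z^m$ obtained by forgetting the distinguished direction $d_0$) is either all of a coset $\lambda + \Z^{m+1}$, or is contained in a proper affine half-space / hyperplane after a suitable change of coordinates by some $\theta \in GL_{m+1}(\Z)$. This is the fundamental trichotomy that forces the two alternatives in the theorem. The key point is that, since strong Harish-Chandra modules have finite multiplicities only with respect to the even Cartan directions, the Grassmann directions contribute only bounded-dimensional corrections and do not affect the shape of the support.

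Next I would treat the cuspidal case, alternative (1). If the support is a full coset $\lambda + \Z^{m+1}$ with uniformly bounded multiplicities, then $M$ is cuspidal, and I would appeal to the classification of simple cuspidal strong Harish-Chandra modules already established earlier in the excerpt: every such module is a quotient of a tensor module $T(V,\lambda)$ for a simple finite-dimensional $gl(m+1,n)$-module $V$. Concretely this uses that tensor modules exhaust the simple weight modules of finite rank in the $\AV$-module category, so one realizes $M$ as an $\AV$-module of finite rank and then identifies it with a subquotient of some $T(V,\lambda)$. This step is essentially a citation to the cuspidal classification together with the observation that bounded multiplicities in the even directions force the finite-rank $\AV$-module structure.

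The substantive work is alternative (2). Here the support, after twisting by $\theta \in GL_{m+1}(\Z)$, lies in a half-space, so there is a distinguished even direction $d_0$ along which $M$ is bounded below; this is precisely the setting of the subalgebra $\Wz = W(m,n) \ltimes \A d_0 \subset W(m+1,n)$. I would analyze the action of $d_0$, extract the extreme weight space with respect to the $d_0$-grading, and show it carries the structure of a module over $\Wz$ on which $f d_0$ acts by a scalar multiplication $\sd f$. Realizing $M$ as a highest weight type module means inducing from this top piece: I would construct a generalized Verma module $\Ind$ induced from a $\Wz$-module and show $M$ is its simple quotient. To pin down the inducing datum, I would show the extreme space is itself a simple cuspidal $\Wz$-module, hence by the cuspidal classification a simple quotient $T$ of a tensor module for $\Wz$. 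The module is then $L(T)^\theta$, the untwisting recovering the original coordinates.

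The main obstacle will be the super-analytic bookkeeping in alternative (2): establishing that the extreme $d_0$-weight space is a \emph{cuspidal} $\Wz$-module with uniformly bounded multiplicities, rather than merely a weight module. One must verify that finiteness of multiplicities in the even directions of $W(m+1,n)$, restricted to the half-space boundary, yields genuine uniform boundedness for the induced $W(m,n)$-action, and that the odd generators $\xi_\alpha, \pd_\beta$ do not introduce unbounded growth along the boundary hyperplane. Controlling how the Grassmann algebra $\La$ interacts with the half-space support — in particular ensuring the $\A d_0$-action degenerates correctly to the scalar $\sd f$ on the extreme space — is the delicate technical heart of the argument, and is where the super setting genuinely departs from \cite{BF}.
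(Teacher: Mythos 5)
Your proposal is correct and follows essentially the same route as the paper: the Mazorchuk--Zhao support dichotomy (Theorem \ref{thm_Mazorchuk-Zhao}), the classification of simple strongly cuspidal modules as quotients of tensor modules (Theorem \ref{quot} together with Theorem \ref{tens}) for alternative (1), and the identification of $M^\theta$ with the simple quotient $L(T)$ of the generalized Verma module induced from the cuspidal boundary $\Wz$-module $T$ for alternative (2). The only divergence is one of emphasis: what you single out as the delicate technical heart --- that the extreme $d_0$-weight space is a simple \emph{cuspidal} $\Wz$-module --- is in the paper already part of the cited Mazorchuk--Zhao result (condition (2) of Theorem \ref{thm_Mazorchuk-Zhao}), whose proofs are asserted to apply verbatim in the super setting, so no new argument is made there.
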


% We will show the dichotomy between cuspidal and highest weight type modules  for arbitrary $m$ and $n$, and 
% hence complete the classification of all simple strong Harish-Chandra $W(m,n)$-modules,  in the forthcoming 
% paper.

The paper has the following structure: we discuss Harish-Chandra modules in Section 2 and their supports in Section 3. In Section 4 we consider the category of weight $\AV$-modules of a finite rank and exhibit the structure
of tensor modules - simple objects in this category. We study simple strongly cuspidal $W(m,n)$-modules in Section 5 and simple modules of the highest weight type in Section 6.

After this paper was completed, we learned about a recent preprint by Yaohui Xu and Rencai L\"u \cite{XL}, where the same results were obtained.

\section*{Acknowledgments}
Y.\,B.\ is supported in part by a Discovery grant from NSERC. Parts of this work was done during the visits of Y.\,B. to the University of S\~ao Paulo and to the University of Lyon. Y.\,B.  would like to thank these universities for their hospitality and FAPESP for funding the visit to USP.
V.\,F.\ is supported in part by the CNPq (304467/2017-0) and by the FAPESP (2018/23690-6).

\

\section{Definitions and Notations}

 Lie superalgebra $W(m,n)$ may be written as a free $\A$-module in the following way:
 \[ \bigoplus_{i=1}^m \A d_i \oplus \bigoplus_{\alpha=1}^n \A \pd_\alpha,
 \]
%$$\V = \sum\limits_{i=1}^m \A d_i \oplus \sum\limits_{\alpha=1}^n \A \pd_\alpha,$$
where $d_i = t_i \frac{\pd}{\pd t_i}$ and $\pd_\alpha = \frac{\pd}{\pd \xi_\alpha}$. We will use roman letters to index variables $t_1, \ldots, t_m$ and greek letters to index $\xi_1, \ldots, \xi_n$. 
% Set $w_\alpha = \xi_\alpha \pd_\alpha$.
We will be using multi-index notation, $t^r = t_1^{r_1} \times \ldots t_m^{r_m}$, for $r \in \Z^m$. 
We also set $|r| = |r_1| + \ldots + |r_m|$.

In addition to the usual derivation $\pd_\alpha$ of $\La$, satisfying
$\pd_\alpha (\xi_\alpha \xi^p) = \xi^p$ when $p_\alpha = 0$, we will also use
a right derivation $\rpd_\alpha$, satisfying $(\xi^p \xi_\alpha) \rpd_\alpha =
 \xi^p$ when $p_\alpha = 0$. We will write the symbol $\rpd_\alpha$ on the right,
in order to satisfy the usual sign conventions of superalgebras. This right derivation
has the following properties:
$$ (fg) \rpd_\alpha = f (g) \rpd_\alpha + (-1)^{\p{g}} (f) \rpd_\alpha g,$$
$$  (f) \rpd_\alpha = (-1)^{\p{f} + 1} \pd_\alpha (f). $$

Throughout the paper we will denote by $\p{f}$ the parity of $f$.

We will consider a chain of Lie superalgebras 
$W(m, n) \subset W(m,n) \ltimes \A d_0 \subset W(m+1,n)=\mathrm{Der}(\C[t_0^{\pm 1}] \otimes \A)$ where $d_0=t_0\dfrac{\partial}{\partial t_0}$. We will be using symbol $\V$ to denote one of these three algebras. 
It will be convenient to state some of our results in the context of $W(m,n)$, while others in the setting of $W(m+1, n)$.
Lie superalgebra $\V$ has a Cartan subalgebra $\h = \hp \oplus \hpp$, where
$\hp = \Span \{ d_1, \ldots, d_m \}$ for $\V=W(m,n)$ and $\hp=\Span \{ d_0, d_1, \ldots, d_m \}$ for $\V=W(m,n) \ltimes \A d_0$ or $\V = W(m+1, n)$, $\hpp = \Span \{ \xi_1 \pd_1, \ldots, \xi_n \pd_n \}$. 

% We will denote by $I$ the set $\{1, \ldots, m\}$ when $\V = W(m,n)$ and $\{0, \ldots, m\}$ when
% $\V=W(m,n) \ltimes \A d_0$ or $\V = W(m+1, n)$. With a slight abuse of notations, we will denote the 
% cardinality of this set, $m$ or $m+1$ by the same symbol $I$.

\begin{definition}
A $\V$-module $M$ is called a Harish-Chandra module if it has a weight decomposition with respect to $\h$ with finite-dimensional weight spaces.
\end{definition}

\begin{definition}
A Harish-Chandra module $M$ is called strong if it has finite-dimensional weight spaces with respect to $\hp$.
\end{definition}

For example, the adjoint module for $\V$ is a strong Harish-Chandra module. Note that strong Harish-Chandra modules coincide with Harish-Chandra modules in the sense of \cite{MZ}.

One immediately gets the following result.

\begin{lemma}\label{lem-HC-odd}
Suppose $M$ is a simple strong Harish-Chandra  $\V$-module. 
 Then the action of $\hpp$ on $M$ is diagonalizable and $M$ is a  Harish-Chandra module.
\end{lemma}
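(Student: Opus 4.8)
The plan is to deduce diagonalizability of $\hpp$ from a Jordan-decomposition argument made rigid by the simplicity of $M$, and then read off the Harish-Chandra property for free. The space $\hpp$ is spanned by the commuting elements $h_\alpha := \xi_\alpha \pd_\alpha$, and each $h_\alpha$ commutes with $\hp$ (the former involves only the $\xi$'s, the latter only the $t$'s). Hence $h_\alpha$ preserves every $\hp$-weight space $M_\lambda$, which is finite-dimensional because $M$ is strong. Thus $h_\alpha$ acts locally finitely on $M$ and admits a Jordan decomposition $h_\alpha = s_\alpha + N_\alpha$ into commuting semisimple and locally nilpotent parts, defined block-by-block on the $M_\lambda$ and glued together; on the generalized $h_\alpha$-eigenspace $M^{(\mu)}$ one has $N_\alpha = h_\alpha - \mu\,\Id$. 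Since the $h_\alpha$ commute, it suffices to prove that each $N_\alpha = 0$: the $s_\alpha$ will then be simultaneously diagonalizable.

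The crux is to show that $N_\alpha$ is a $\V$-module endomorphism. I would first note that $\ad h_\alpha$ is diagonalizable on $\V$ with integer eigenvalues, giving the grading $\V = \V_{-1}\oplus\V_0\oplus\V_1$; in particular $\V$ is spanned by elements $u$ with $[h_\alpha, u] = k u$ for some $k \in \Z$. For such a $u$ one has $u\, h_\alpha = h_\alpha u - k u$, and $u$ maps $M^{(\mu)}$ into $M^{(\mu+k)}$, so for $v \in M^{(\mu)}$
\[ N_\alpha(uv) = (h_\alpha - (\mu+k)\,\Id)(uv) = u(h_\alpha - \mu\,\Id)v = u N_\alpha v, \]
whence $[N_\alpha, u] = 0$. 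As such $u$ span $\V$, the operator $N_\alpha$ commutes with the entire action of $\V$.

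Because $U(\V)$ has countable dimension over the uncountable field $\C$ and $M$ is simple, Schur's lemma in Dixmier's form yields $\End_\V(M) = \C\,\Id$; since $N_\alpha$ is locally nilpotent, the corresponding scalar is $0$, so $N_\alpha = 0$. Therefore every $h_\alpha$ acts semisimply, and $\hpp$ acts diagonalizably on $M$. Finally, $\hpp$ commutes with $\hp$ and both act diagonalizably, so $M$ decomposes into joint $\h = \hp \oplus \hpp$-eigenspaces; each such $\h$-weight space lies inside some finite-dimensional $M_\lambda$ and is therefore finite-dimensional, which is exactly the assertion that $M$ is a Harish-Chandra module.

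I expect the genuine obstacle to be the vanishing $N_\alpha = 0$, and I want to stress where simplicity is indispensable. The "semisimple'' directions of $\hpp$ lying in the copy of $\mathfrak{sl}_n$ spanned by the $\xi_\alpha \pd_\beta$ could alternatively be handled by Weyl's theorem applied to the finite-dimensional $M_\lambda$, but the central fermion-number direction $\sum_\alpha h_\alpha$ is \emph{not} a coroot of any $\mathfrak{sl}_2$ inside the centralizer of $\hp$, so its semisimplicity cannot be obtained weight-space-by-weight-space; indeed on a non-simple weight module it may genuinely carry Jordan blocks. It is precisely the commuting of $N_\alpha$ with all of $\V$, together with Schur's lemma, that excludes this, so the computation of the previous paragraph and the Schur step are the heart of the proof.
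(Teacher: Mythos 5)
Your proof is correct, and it uses exactly the two ingredients the paper's own (one-sentence) proof cites: the finite-dimensional $\hp$-weight spaces, which make each $\xi_\alpha\pd_\alpha$ act locally finitely and yield the Jordan decomposition, and the simplicity of $M$, which kills the locally nilpotent parts. The paper leaves all details to the reader, and your argument is a faithful and complete elaboration of that sketch (your Schur step could even be shortened: $\Ker N_\alpha$ is a nonzero $\V$-submodule, hence all of $M$).
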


\begin{proof}
Since $M$ has a weight space decomposition with respect to $\hp$ with finite-dimensional weight spaces, 
the statement follows from the simplicity of $M$. 
\end{proof}

\

\section{Supports of strong Harish-Chandra modules}

 Cuspidal modules form an important subcategory of the Harish-Chandra modules.
 
 \begin{definition}
 A $\V$-module $M$ is called cuspidal (resp. strongly cuspidal) if $M$ is a Harish-Chandra (resp. strong Harish-Chandra) module 
 whose dimensions of weight spaces relative to $\h$ (resp. $\hp$) are uniformly bounded.
 \end{definition}
 
 We define {\it support} $\supp M$ of a strong Harish-Chandra $\V$-module $M$ as a set of weights 
 $\lambda \in \h^{\prime *}$ such that $M_\lambda \neq 0$. 
 
 Let us choose a basis $\{ \eps_i \}$ in $\h^{\prime *}$, dual to the basis $\{ d_i \}$ of $\hp$,  
 with weights $\eps_i$ defined by
 $\eps_i (d_j) = \delta_{ij}$.  
 We embed in $\h^{\prime *}$ integral lattice $\Z^{\dim \hp}$ spanned by $\{ \eps_i \}$.
 
 If a $\V$-module $M$ is indecomposable then its support lies in a single coset $\lambda + \Z^{\dim \hp}$,
 $\lambda \in \h^\prime$.
 
 Let us discuss twisting of a module by an automorphism of a Lie algebra. If $M$ is a $\V$-module and 
 $\theta$ is an automorphism of $\V$ then we can construct another $\V$-module $M^\theta$ with a new
 action of $\V$ on the same space $x.m = \theta(x) m$, for $x \in \V$, $m \in M$.
 
 The algebra of Laurent polynomials $\R_{m+1} = \C [t_0^{\pm 1}, t_1^{\pm 1}, \ldots, t_m^{\pm 1}]$ is 
 isomorphic to the group algebra $\C[\Z^{m+1}]$. The action of $GL_{m+1} (\Z)$ on $\Z^{m+1}$ extends to
 the action on $\R_{m+1}$ by $\theta (t^s) = t^{\theta(s)}$, $\theta \in GL_{m+1}(\Z)$, $s \in \Z^{m+1}$.

 This action also induces the action of $GL_{m+1} (\Z)$ on $\Der (\R_{m+1} ) = W(m+1, n)$
 by $\theta(\eta) (f) = \theta( \eta( \theta^{-1} (f)))$ for $\eta \in W(m+1, n)$, $f \in \R_{m+1}$.
 The action of $\theta$ on $\h^\prime$ induces action on $\h^{\prime *}$, which is given by the transpose of the inverse matrix ${\theta^{-1}}^T$ in basis $\{ \eps_i \}$. 
 If $M$ is a $W(m+1, n)$-module and $\theta \in GL_{m+1}(\Z)$ then $\supp M^\theta = {\theta^{-1}}^T (\supp(M))$.
 
Mazorchuk-Zhao \cite{MZ} described supports of simple Harish-Chandra $W(m, 0)$-modules. Their proofs literally apply to strong Harish-Chandra $W(m, n)$-modules yielding the following result:
\begin{theorem}
\label{thm_Mazorchuk-Zhao} 
(\cite{MZ}, Theorem 1)
Let $M$ be a simple strong Harish-Chandra $W(m+1, n)$-module. Then either

\noindent
(1) $M$ is a strongly cuspidal module, or

\noindent
(2) there exists $\theta \in GL_{m+1}(\Z)$ and $\lambda \in \supp M^\theta$ such that
$$\supp (M^\theta) \subset \lambda + \Z_+ \eps_0 + \Z \eps_1 + \ldots + \Z \eps_m$$
and the subspace
$$\mathop\oplus\limits_{\mu \in \lambda + \Z \eps_1 + \ldots + \Z \eps_m} (M^\theta)_\mu$$
is a simple cuspidal $W(m,n) \ltimes \R_m d_0$-module.
\end{theorem}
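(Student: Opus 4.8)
The statement is the super-analogue of the Mazorchuk--Zhao support theorem for the Witt algebra, and the plan is to run their argument after checking that the Grassmann variables do not affect the geometry of the support. First I would invoke indecomposability to place $\supp M$ in a single coset $\lambda + \Z\eps_0 + \Z\eps_1 + \ldots + \Z\eps_m$. The reduction to the purely even case rests on one observation: the grading by $\hp$ records only the $t$-degree, so each generator $t^r\xi^p d_i$ and $t^r\xi^p\pd_\alpha$ shifts the $\hp$-weight by the same $r \in \Z^{m+1}$, independently of the Grassmann exponent $p$. Since the $\hpp$-action is diagonalizable with only finitely many eigenvalues (Lemma \ref{lem-HC-odd}), every $\hp$-weight space is a finite sum of $\h$-weight spaces, and the shape of $\supp M \subset \Z^{m+1}$ is constrained exactly as for the Witt algebra $W(m+1,0)$ spanned by the $\xi$-free vector fields $t^r d_i$, since these already realize every shift $r \in \Z^{m+1}$. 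This is the precise sense in which the Mazorchuk--Zhao combinatorics transfers verbatim.

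Next I would determine the shape of $\supp M$ inside its coset, the goal being the dichotomy: either $\supp M$ fills the coset, or it lies in a half-space with rational boundary. The engine is the family of even shift operators $t^r d_i \colon M_\mu \to M_{\mu+r}$; fixing a lattice direction and letting $r$ run along it, a Vandermonde-type interpolation among the operators $t^{kr} d_i$, $k \in \Z$, forces the support to have no gaps along lines and to acquire a flat (hyperplane) boundary whenever it is bounded in some direction. The hard part is the accompanying boundedness statement: when $\supp M$ fills the whole coset one must prove that finite weight multiplicities are automatically \emph{uniformly} bounded, so that $M$ is strongly cuspidal, which is case (1). This is the technical core of the Mazorchuk--Zhao analysis, and it is exactly where I would verify that the odd generators contribute only a bounded multiplicity in each $t$-degree and never enlarge the support, so that their estimates apply without change.

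In the remaining case $\supp M$ has a supporting hyperplane; because it sits inside a lattice, this hyperplane has a primitive integral normal $v$. As $GL_{m+1}(\Z)$ acts transitively on primitive vectors, I would pick $\theta$ with $\theta(v)=\eps_0$, and then the identity $\supp M^\theta = {\theta^{-1}}^T(\supp M)$ recorded above gives, after translating $\lambda$ to the extremal slice, the containment $\supp M^\theta \subset \lambda + \Z_+ \eps_0 + \Z\eps_1 + \ldots + \Z\eps_m$.

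Finally, the twisted module $M^\theta$ is simple, graded by the $\eps_0$-degree (the $d_0$-eigenvalue), and bounded below by the previous step. Ordering a PBW basis so that the degree-lowering factors act first --- these annihilate the extremal slice --- yields the standard conclusion that $\bigoplus_{\mu \in \lambda + \Z\eps_1 + \ldots + \Z\eps_m} (M^\theta)_\mu$ is a simple module over the degree-zero subalgebra $W(m,n)\ltimes\R_m d_0$ of the statement. It is cuspidal because its weight multiplicities coincide with the corresponding multiplicities of $M^\theta$, which are uniformly bounded along each hyperplane slice by the shape analysis of the second step. Throughout, the single genuine obstacle is that boundedness input; everything else is a transcription of the Lie-algebra argument once the support geometry has been identified with that of $W(m+1,0)$.
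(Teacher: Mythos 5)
Your proposal matches the paper's treatment: the paper gives no independent proof of this theorem, but simply cites Mazorchuk--Zhao and asserts that their proofs for the Lie algebra case apply literally to strong Harish-Chandra $W(m+1,n)$-modules, which is exactly the transfer you carry out (single coset, support geometry governed by the $\hp$-grading alone since all generators shift $\hp$-weights by the same lattice $\Z^{m+1}$ and $\hpp$ has finitely many eigenvalues, $GL_{m+1}(\Z)$-twist to a half-space, extremal slice simple and cuspidal over the degree-zero subalgebra). If anything, your sketch supplies more justification for the reduction to the purely even setting than the paper itself records.
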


We will call a $W(m+1,n)$-module $M$ a module of a {\it highest weight type} if it satisfies condition (2) of the above theorem.

In the subsequent sections we will first study simple strongly cuspidal modules and then use their structure to get a
description of simple modules of the highest weight type. 

\

\section{$\AV$-modules of a finite type}

We begin in a general setting. Let $\A$ be an arbitrary commutative unital superalgebra and 
$\V$ be a Lie superalgebra. 
We assume that $\V$ is an $\A$-module and $\V$ acts on $\A$ by derivations, and the following
relation holds:
$$ [f \eta, g \tau] = f \eta(g) \tau - (-1)^{(\p{f} + \p{\eta})(\p{g} + \p{\tau})} g \tau(f) \eta
+ (-1)^{\p{\eta} \p{g}} fg [\eta, \tau] .$$ 
A prototypical example of this setting is when $\V$ is the algebra of derivations of $\A$.
The foundations of the theory we discuss in this section were laid by Rinehart \cite{R}.

\begin{definition}
An $\AV$-module $M$ is a vector space with actions of a unital commutative superalgebra $\A$ and  a Lie superalgebra $\V$, which are compatible via the Leibniz rule:
$$\eta (f m) = \eta(f) m + (-1)^{\p{\eta}\p{f}} f (\eta m),  \ \ 
f \in \A, \, \eta\in\V, \, m \in M.$$
\end{definition}

Equivalently, $\AV$-module structure may be expressed via the smash product $\A \# U(\V)$. View $U(\V)$ as a Hopf algebra with a coproduct $\Delta$, $\Delta(u) = \sum u_{(1)} \otimes u_{(2)}$. The smash product $\A \# U(\V)$ is the associative algebra structure on vector space $\A \otimes U(\V)$ with the product
$$ (f \otimes u) (g \otimes v) = \sum (-1)^{\p{u}_{(2)} \p{g} }
f u_{(1)} (g) \otimes u_{(2)} v .$$ 
% Notice that, for an $\AV$-module $M$, the $\A \# U(\V)$-module structure on $M$ is defined by 
% $$ (f\# u) m=f (u m), \ \ f \in \A, u \in U(\V), m \in M.$$

Then $\AV$-module structure is equivalent to the action of the associative algebra $\A \# U(\V)$.

\begin{definition}
The {\it algebra of differential operators} $\DD(\A, \V)$ is the quotient of $\A \# U(\V)$
by the ideal generated by the elements $f \# \eta - 1 \# f \eta$, $f \in \A$, $\eta \in \V$.
\end{definition}

 The following Lemma follows immediately from the definition:

\begin{lemma}
The subspace $\A \# \V = \A \otimes \V \subset \A \# U(\V)$ is a Lie subsuperalgebra with the Lie bracket
$$[f \otimes \eta, g \otimes \tau] = f \eta(g) \otimes \tau 
- (-1)^{(\p{f} + \p{\eta})(\p{g} + \p{\tau})} g \tau(f) \otimes \eta
+ (-1)^{\p{\eta} \p{g}} fg \otimes [\eta, \tau] .$$
\end{lemma}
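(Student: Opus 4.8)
The plan is to compute the supercommutator of two elements of $\A \otimes \V$ directly inside the associative algebra $\A \# U(\V)$ and check that the result again lies in $\A \otimes \V$ with the asserted bracket. Since $\A \# U(\V)$ is associative, its supercommutator automatically satisfies super-antisymmetry and the super-Jacobi identity, so the only thing to verify is closure of the subspace $\A \otimes \V$ under the bracket. The decisive simplification is that every $\eta \in \V$ is a primitive element of the Hopf algebra $U(\V)$, i.e. $\Delta(\eta) = \eta \otimes 1 + 1 \otimes \eta$.

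First I would substitute this coproduct into the smash product formula. Because $1 \in U(\V)$ acts as the identity on $\A$ and has parity $0$, the sum over $\Delta(\eta)$ collapses to exactly two terms:
$$(f \otimes \eta)(g \otimes \tau) = f \eta(g) \otimes \tau + (-1)^{\p{\eta}\p{g}} fg \otimes \eta\tau,$$
and the same formula with $(f,\eta)$ and $(g,\tau)$ interchanged gives the opposite product $(g \otimes \tau)(f \otimes \eta)$. Next I would form
$$[f \otimes \eta, g \otimes \tau] = (f \otimes \eta)(g \otimes \tau) - (-1)^{(\p{f}+\p{\eta})(\p{g}+\p{\tau})}(g \otimes \tau)(f \otimes \eta)$$
and sort the four resulting summands by their $U(\V)$-factor. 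The terms ending in $\tau$ and in $\eta$ reproduce $f\eta(g) \otimes \tau$ and $-(-1)^{(\p{f}+\p{\eta})(\p{g}+\p{\tau})} g\tau(f) \otimes \eta$ at once, matching the first two terms of the claimed bracket.

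For the remaining two summands, ending in $\eta\tau$ and in $\tau\eta$, I would use the supercommutativity of $\A$ to rewrite $gf = (-1)^{\p{f}\p{g}} fg$. The term with $\eta\tau$ already reads $(-1)^{\p{\eta}\p{g}} fg \otimes \eta\tau$; in the term with $\tau\eta$ the accumulated exponent is $(\p{f}+\p{\eta})(\p{g}+\p{\tau}) + \p{\tau}\p{f} + \p{f}\p{g}$, which in $\Z/2$ collapses — the pairs $\p{f}\p{g}+\p{g}\p{f}$ and $\p{f}\p{\tau}+\p{\tau}\p{f}$ cancel — to $\p{\eta}\p{g} + \p{\eta}\p{\tau}$, so that term equals $-(-1)^{\p{\eta}\p{g}+\p{\eta}\p{\tau}} fg \otimes \tau\eta$. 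Together these two are precisely the expansion of $(-1)^{\p{\eta}\p{g}} fg \otimes (\eta\tau - (-1)^{\p{\eta}\p{\tau}}\tau\eta) = (-1)^{\p{\eta}\p{g}} fg \otimes [\eta,\tau]$, the third term of the claimed bracket. Since $[\eta,\tau] \in \V$ (as $\V$ is a Lie subsuperalgebra of $U(\V)$), all three summands lie in $\A \otimes \V$, proving closure. The only genuine work, and the place I expect the main difficulty, is the sign bookkeeping: carefully tracking the four parities $\p{f},\p{\eta},\p{g},\p{\tau}$ through both the commutator and the superswap in $\A$, and verifying the $\Z/2$ cancellation in the $\tau\eta$ exponent.
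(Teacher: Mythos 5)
Your proposal is correct, and it is precisely the verification the paper leaves implicit: the paper offers no written proof, stating only that the lemma ``follows immediately from the definition,'' and the immediate verification in question is exactly your computation --- expand the smash product using the primitivity of elements of $\V$, form the supercommutator, and use supercommutativity of $\A$ to collect the $\eta\tau$ and $\tau\eta$ terms into $(-1)^{\p{\eta}\p{g}} fg \otimes [\eta,\tau]$. Your sign bookkeeping, including the $\Z/2$ cancellation reducing the $\tau\eta$ exponent to $\p{\eta}\p{g}+\p{\eta}\p{\tau}$, is accurate.
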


% Let us now return to our settings where $\A = \R_m \otimes \Lambda$ and $\V = W(m,n)$ 
% {\color{blue}or $W(m,n)\ltimes \A d_0$}. 

\begin{lemma} 
Let $\A = \R_m \otimes \Lambda$ and $\V = W(m,n)$. Let $M$ be an $\AV$-module with a weight decomposition relative to $\hp$. Then $M$ is finitely generated over $\A$ if and only if it is a strong Harish-Chandra module. 
\end{lemma}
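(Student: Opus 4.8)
The plan is to exploit the $\hp$-weight gradings of both $\A$ and $\V$, which take values in the lattice $\Z^m \subset \h^{\prime *}$. First I would record that every homogeneous generator $t^r \xi^p$ of $\A = \R_m \otimes \La$ has $\hp$-weight $r$, and likewise every $t^r \xi^p d_i$ and $t^r \xi^p \pd_\alpha$ in $\V = W(m,n)$ has $\hp$-weight $r$; thus both the $\A$-action and the $\V$-action shift $\hp$-weights only by elements of $\Z^m$. Consequently $M$ decomposes as a direct sum of $\AV$-submodules indexed by the cosets of $\Z^m$ in $\h^{\prime *}$, so I may work one coset $\lambda + \Z^m$ at a time. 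The two facts that drive the argument are: (i) the $\hp$-weight-$s$ component $\A_s$ of $\A$ equals $\C t^s \otimes \La$ and so has dimension $\dim \La = 2^n$ for $s \in \Z^m$; and (ii) each monomial $t^r$ acts invertibly on $M$, its inverse being $t^{-r}$, whence multiplication by $t^r$ is a linear isomorphism from $M_\mu$ onto $M_{\mu + r}$ and all weight spaces in a single coset share one dimension.

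For the direction finitely generated $\Rightarrow$ strong Harish-Chandra, I would use the $\hp$-weight decomposition to replace a finite generating set by its weight components, obtaining finitely many weight generators $m_1, \dots, m_k$ of weights $\nu_1, \dots, \nu_k$. Then $M_\mu = \sum_j \A_{\mu - \nu_j} m_j$, so $\dim M_\mu \le k \cdot 2^n$ by (i), and $\supp M$ meets only the finitely many cosets $\nu_j + \Z^m$. To promote finite $\hp$-multiplicities to an honest $\h$-weight decomposition I would show that $\hpp$ acts semisimply on any $\AV$-module: writing $a$ for multiplication by $\xi_\alpha$ and $b$ for the action of $\pd_\alpha$, the Leibniz rule gives $ab + ba = 1$, while $a^2 = 0$ (since $\xi_\alpha^2 = 0$) and $b^2 = 0$ (since $\pd_\alpha^2 = 0$ in $U(\V)$). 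Hence $\xi_\alpha \pd_\alpha = ab$ satisfies $(ab)^2 = a(ba)b = a(1 - ab)b = ab - a^2 b^2 = ab$, i.e. it is idempotent; the commuting idempotents $\xi_\alpha \pd_\alpha$ are therefore simultaneously diagonalizable, splitting each finite-dimensional $\hp$-weight space into $\h$-weight spaces. Thus $M$ is a strong Harish-Chandra module.

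For the converse I reduce, via the coset decomposition above, to a module supported in a single coset $\lambda + \Z^m$ (the relevant case, since as noted in Section 3 an indecomposable module has support in one coset). Put $d = \dim M_\lambda < \infty$ and choose a basis $v_1, \dots, v_d$ of $M_\lambda$. By (ii) the family $\{ t^r v_j \}_{1 \le j \le d}$ is a basis of $M_{\lambda + r}$ for every $r \in \Z^m$, so $\{ t^r v_j \}$ is a basis of the whole coset piece. This identifies the coset piece with the free $\R_m$-module of rank $d$ on $v_1, \dots, v_d$; since $\R_m \subset \A$, it is finitely generated over $\A$. Summing over the finitely many cosets in the support yields that $M$ is finitely generated over $\A$.

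The substantive step is the converse, and the crux is observation (ii): the invertibility of the $t^r$ collapses the a priori infinite-dimensional $\A$-module in the torus directions into a finite free $\R_m$-module, and this is exactly where finiteness of the weight multiplicities is used. The point needing care is the reduction to finitely many cosets, as finite generation over $\A$ is equivalent not to finite $\hp$-multiplicities alone but to finite $\hp$-multiplicities together with support meeting finitely many cosets; the latter is automatic for the indecomposable modules to which the lemma is applied. By contrast, observation (i) together with the idempotent identity $ab + ba = 1$ makes the forward direction and the $\hpp$-diagonalizability essentially immediate.
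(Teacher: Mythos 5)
Your main equivalence is proved correctly and by essentially the same argument as the paper: both proofs rest on the observation that each $t^r$ acts bijectively on $M$ and transitively permutes the $\hp$-weight spaces within a coset of $\Z^m$, so that the piece of $M$ supported on $\lambda+\Z^m$ is identified with $\R_m\otimes M_\lambda$, and finite generation then matches finite-dimensionality of the weight spaces. Your explicit handling of the finitely-many-cosets caveat plays the role of the paper's reduction ``without loss of generality $M$ is indecomposable,'' and is if anything more candid about what the literal statement requires.

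There is, however, a genuine error in the step where you ``promote'' finite $\hp$-multiplicities to an $\h$-weight decomposition. You identify the action of the Cartan element $\xi_\alpha\pd_\alpha\in\hpp\subset\V$ with the composition $ab$ of multiplication by $\xi_\alpha$ and the action of $\pd_\alpha$. That identity is precisely the defining relation of the algebra of differential operators $\DD(\A,\V)$, which is a proper quotient of $\A\# U(\V)$ by the ideal generated by the elements $f\#\eta-1\#f\eta$; in a general $\AV$-module, i.e.\ a module over $\A\# U(\V)$ itself, the action of $\xi_\alpha\pd_\alpha$ and the operator $ab$ can differ. A concrete example within the scope of the lemma is the adjoint module $M=W(m,n)$ with $\A$ acting by left multiplication: there the module action of $\xi_\alpha\pd_\alpha$ on the element $\pd_\alpha\in M$ is $[\xi_\alpha\pd_\alpha,\pd_\alpha]=-\pd_\alpha$, whereas $ab(\pd_\alpha)=\xi_\alpha\,[\pd_\alpha,\pd_\alpha]=0$. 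Your computation does prove that $ab$ is idempotent (the Leibniz rule indeed gives $ab+ba=1$ and $a^2=b^2=0$), but since $ab$ need not be the $\hpp$-action, diagonalizing the commuting idempotents does not produce an $\h$-weight decomposition. The damage is contained: the equivalence the paper states and uses concerns only $\hp$-weight spaces (its own proof never mentions $\hpp$), and diagonalizability of $\hpp$ on the simple modules to which the lemma is ultimately applied is obtained separately in Lemma \ref{lem-HC-odd}, where it follows from simplicity. But as written, your claim that $\hpp$ acts semisimply on an arbitrary $\AV$-module is unproved --- it is a statement about $\DD(\A,\V)$-modules, not about $\AV$-modules.
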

\begin{proof} 
Without loss of generality we may assume that $M$ is indecomposable. Since $\A$ is finitely generated over $\R_m$, being finitely generated over $\A$ is equivalent to being finitely generated over $\R_m$. Consider weight decomposition of $M$ relative to $\hp$:
$$M = \mathop\oplus\limits_{\mu \in \h^{\prime *}} M_\mu. $$
Note that $t^s \xi^r M_\mu \subset M_{\mu + s}$, 
 $t^s \xi^r d_i M_\mu \subset M_{\mu + s}$,
 $t^s \xi^r \pd_\alpha M_\mu \subset M_{\mu + s}$. 
% {\color{blue} for $0\; \text{or}\; 1\leq i\leq m$ and $1\leq \alpha\leq n$. 
% In particular, as $d_0$ is central, it acts as a scalar operator, say $c_M\mathrm{id}_M$, on $M$}.
We view elements of $\Z^m$ as linear functionals on $\hp$ via $s (d_i) = s_i$ for $1\leq i\leq m$.
Since $M$ is indecomposable, its support consists of a single coset $\lambda + \Z^m$.
Suppose that $M$ is finitely generated as an $\R_m$-module. We may assume that all generators are weight vectors. Since monomials $t^s \in \R_m$ act bijectively and transitively on the set of weight spaces, we may further assume that all generators belong to the same weight space $U = M_\lambda$. Then $M =\A \otimes_{\La} U \cong  \R_m \otimes U$ and we see that being finitely generated over $\R_m$ is equivalent to being a strong Harish-Chandra module.
\end{proof}
For the rest of this section we will assume that $\V = W(m,n)$ or $\V = \Wz$. we will be proving many results 
simultaneously for both of these algebras. When considering the case  $\V = W(m,n)$, all statements 
containing expressions with a subscript $0$, e.g., $d_0$, should be ignored. Throughout the section index $i$ 
will be assumed to run from 1 to $m$, and index $\alpha$ from 1 to $n$.

Let us assume that $M$ is a simple strong Harish-Chandra $\AV$-module. From the proof of the previous Lemma we see that $M \cong \A \otimes_{\La} U$, where $U = M_\lambda$
with $\lambda \in \h^{\prime *}$, $\dim U < \infty$.  
Without loss of generality we may assume that $\lambda \neq 0$.
Note that $U$ is a module over $\La$ with a weight decomposition with respect to $\hpp$.

 Lie superalgebra $\A \# \V$ has a Cartan subalgebra $1 \otimes \h$. The action of 
$1 \otimes \hp$ induces a $\Z^m$-grading on $\A \# \V$, let us denote by 
$(\A \# \V)_0$ the homogeneous component of degree zero, which is a Lie subsuperalgebra.

Lie superalgebra $(\A \# \V)_0$ is spanned by the following elements: \break
$f D_i (g, r) = t^{-r} f \otimes t^r g d_i$, \    
$f \Delta_\alpha (g, r) = t^{-r} f \otimes t^r g \pd_\alpha$, and
$f D_0 (g, r) = t^{-r} f \otimes t^r g d_0$, 
where $f, g \in \Lambda$, $r \in \Z^m$.

In the following Proposition, we record Lie brackets for the generators of  $(\A \# \V)_0$ 
as $\Lambda$-module. The proof is a straightforward calculation, and we omit it.

\begin{proposition}
\label{rel}
Elements $D_i (f, r)$, $\Delta_\alpha (f,r)$, $D_0 (f,r) \in (\A \# \V)_0$, satisfy the following commutator relations:
\begin{equation}
\label{DD}
\begin{split}
[D_i (f,r), D_j (g,s)] =
&s_i D_j (fg , r+s) - s_i f D_j (g, s) \\
&-r_j D_i (fg, r+s) + (-1)^{\p{f} \p{g}} r_j g D_i (f, r),
\end{split}
\end{equation}
\begin{equation}
\begin{split}
\label{DV}
[D_i (f,r), \D_\alpha (g,s)] =
&s_i \D_\alpha (fg , r+s) - s_i f \D_\alpha (g, s) \\
&- (-1)^{\p{f} + \p{g}} D_i (\pd_\alpha(f) g, r + s),
\end{split}
\end{equation}
\begin{align}
\label{VV}
[\D_\alpha (f,r), \D_\beta (g,s)] =
\D_\beta (f \pd_\alpha (g) , r+s)  
- (-1)^{\p{f} + 1 } \D_\alpha (\pd_\beta(f) g, r + s),
\end{align}
\begin{equation}
\label{DDz}
[D_i (f,r), D_0 (g,s)] = s_i D_0 (fg , r+s) - s_i f D_0 (g, s),
\end{equation}
\begin{equation}
\label{VDz}
[\D_\alpha (f,r), D_0 (g,s)] = D_0 (f \pd_\alpha(g), r + s),
\end{equation}
\begin{equation}
\label{DzDz}
[D_0 (f,r), D_0 (g,s)] = 0. 
\end{equation}
\end{proposition}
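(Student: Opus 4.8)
The plan is to reduce every one of the six identities to a direct substitution into the Lie bracket of $\A \# \V$ recorded in the preceding Lemma. As elements of $\A \# \V = \A \otimes \V$ the generators are $D_i(f,r) = t^{-r}\otimes t^r f d_i$, $\Delta_\alpha(f,r) = t^{-r}\otimes t^r f \pd_\alpha$ and $D_0(f,r) = t^{-r}\otimes t^r f d_0$, and the $\La$-module structure is left multiplication in the first tensor slot, so that $h\cdot D_j(g,s) = t^{-s} h \otimes t^s g d_j$, and likewise for $\Delta_\alpha$ and $D_0$. For a bracket $[a\otimes\eta, b\otimes\tau]$ that Lemma produces three terms: $a\,\eta(b)\otimes\tau$, the graded term $-(-1)^{(\p{a}+\p{\eta})(\p{b}+\p{\tau})} b\,\tau(a)\otimes\eta$, and $(-1)^{\p{\eta}\p{b}}\,ab\otimes[\eta,\tau]$. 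First I would plug in the explicit forms above and expand each of these three contributions separately.

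The expansion uses only elementary facts. A basic even derivation acts by $d_i(t^{\pm s}) = \pm s_i\, t^{\pm s}$, the odd derivation $\pd_\alpha$ annihilates every pure monomial $t^{\pm s}\in\R_m$ and differentiates $\La$, and $d_0$ acts as zero on $\A = \R_m\otimes\La$ since no variable $t_0$ occurs. Thus the first two terms (the $\eta(b)$ and $\tau(a)$ terms) generate the $\mp s_i$ and $\pm r_j$ coefficients in the $D$--$D$ and $D$--$D_0$ brackets, and they vanish identically precisely when the derivation applied to a pure $t$-power is $\pd_\alpha$ or $d_0$; this is exactly why the $\Delta$--$\Delta$, $\Delta$--$D_0$ and $D_0$--$D_0$ brackets lose those terms. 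For the third contribution one needs $[\eta,\tau]$ inside $\V$; since $d_i,d_j,d_0$ commute and $\pd_\alpha,\pd_\beta$ anticommute to zero, the $W(m,n)$ bracket of two basic derivations always vanishes, so applying the $W(m,n)$ bracket one level down reduces $[\eta,\tau]$ to its two derivation-action pieces only. This makes \eqref{DzDz} immediate and shrinks each remaining third contribution to a single application of $d_i$ or $\pd_\alpha$ to one coefficient.

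Finally I would reassemble everything in terms of the generators, rewriting each surviving monomial $t^{-r-s}\otimes t^{r+s}(\cdots)\mu$ as a $D_i$, $\Delta_\alpha$ or $D_0$ with the displayed coefficient, and reading off a factored prefactor $t^{-s}h$ as $h\cdot D_j(\,\cdot\,)$. The only genuine obstacle is the sign bookkeeping. In the third contribution the coefficient emerges in the order $g\,\pd_\alpha(f)$ for \eqref{DV} and $g\,\pd_\beta(f)$ for \eqref{VV}, whereas the stated right-hand sides carry $\pd_\alpha(f)\,g$ and $\pd_\beta(f)\,g$; transposing these factors by supercommutativity of $\La$ introduces a Koszul sign $(-1)^{\p{g}(\p{f}+1)}$ that must be combined with the sign already coming from the bracket formula. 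The point to check is that these exponents collapse modulo $2$ to the simple forms stated: in \eqref{VV} the product $(-1)^{(\p{f}+1)(\p{g}+1)}(-1)^{\p{g}(\p{f}+1)} = (-1)^{(\p{f}+1)(2\p{g}+1)} = (-1)^{\p{f}+1}$, and the analogous cancellation in \eqref{DV} leaves $(-1)^{\p{f}+\p{g}}$. Once these normalizations are recorded, matching each of \eqref{DD}--\eqref{DzDz} term by term is routine.
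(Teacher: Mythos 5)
Your proposal is correct and is exactly the ``straightforward calculation'' that the paper omits: expanding each bracket via the smash-product formula $[a\otimes\eta,b\otimes\tau]=a\,\eta(b)\otimes\tau-(-1)^{(\p{a}+\p{\eta})(\p{b}+\p{\tau})}b\,\tau(a)\otimes\eta+(-1)^{\p{\eta}\p{b}}ab\otimes[\eta,\tau]$ from the preceding Lemma, using that $\pd_\alpha$ and $d_0$ kill $t$-monomials and that basic derivations have vanishing brackets, and then normalizing the Koszul signs, whose collapse to $(-1)^{\p{f}+\p{g}}$ in \eqref{DV} and $(-1)^{\p{f}+1}$ in \eqref{VV} you verify correctly.
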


We have actions of a commutative superalgebra $\La$ and a Lie 
superalgebra $\AVZ$ on a finite-dimensional space $U = M_\lambda$, which satisfy the following compatibility properties:
\begin{align}
\label{LLambda1}
\eta (f u) =&\eta(f) u + (-1)^{\p{\eta}\p{f}} f (\eta u), \\
\label{LLambda2}
f (\eta u) =&(f \eta) u,
\end{align}
for $\eta \in \AVZ$, $f\in\La$, $u\in U$.
This means that $U$ admits the action of the algebra of differential operators $\DD(\La, \AVZ)$.

\begin{proposition}
\label{recover}
The action of $\A \# U(\V)$ on $M$ may be recovered from the joint actions of $\La$, $\AVZ$ on $U$:
\begin{align*}
%\label{recover1}
t^r f d_i (t^s \otimes u) =& t^r d_i (t^s) \otimes f u
+ t^{r+s} \otimes D_i (f,r) u \\
%\label{recover2}
t^r f \pd_\alpha (t^s \otimes u) =&  t^{r+s} \otimes \Delta_\alpha (f,r) u, \\
t^r f d_0 (t^s \otimes u) =& t^{r+s} \otimes D_0 (f,r) u \\
\end{align*} 
\end{proposition}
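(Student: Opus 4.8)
The plan is to verify the three identities by direct computation from the $\AV$-module axioms, using the identification $M \cong \A \otimes_\La U$ recorded above, under which every vector is a combination of elements $t^s \otimes u$ with $u \in U = M_\lambda$, and where $t^s \otimes u$ is literally the image of $u \in U \subset M$ under the $\A$-action of $t^s$. First I would apply the Leibniz rule from the definition of an $\AV$-module with $\eta$ equal to $t^r f d_i$ (respectively $t^r f \pd_\alpha$ and $t^r f d_0$) and with $g = t^s \in \R_m \subset \A$:
$$\eta(t^s u) = \eta(t^s)\, u + (-1)^{\p{\eta}\,\p{t^s}}\, t^s(\eta u).$$
Since $t^s$ is even, the sign is trivial, and the computation splits cleanly into an ``inhomogeneous'' piece $\eta(t^s)\,u$, coming from the derivation action of $\V$ on $\A$, and a ``weight-shift'' piece $t^s(\eta u)$.

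Next I would evaluate $\eta(t^s)$ using the standard derivation action. Since $d_i(t^s) = s_i t^s$ while $\pd_\alpha(t^s) = 0 = d_0(t^s)$, one obtains $\eta(t^s) = s_i\, t^{r+s} f$ in the $d_i$-case and $\eta(t^s) = 0$ in the $\pd_\alpha$- and $d_0$-cases. Pulling the Grassmann factor $f \in \La$ across the tensor product over $\La$ gives $\eta(t^s)\,u = s_i\, t^{r+s} \otimes f u = t^r d_i(t^s) \otimes f u$, which accounts for the first term in the $d_i$-formula and explains the absence of any such term in the other two.

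The conceptual step is to rewrite the weight-shift piece through the degree-zero subalgebra $\AVZ$. The vector $\eta u$ lies in $M_{\lambda + r}$, and by definition $D_i(f,r) = t^{-r} \otimes t^r f d_i \in \A \# \V$ acts on $u$ by first applying $t^r f d_i$ and then multiplying by $t^{-r} \in \A$; hence $D_i(f,r)\, u = t^{-r}\cdot(\eta u) \in M_\lambda = U$, and because $t^r$ acts bijectively between weight spaces this rearranges to $\eta u = t^r \otimes D_i(f,r)\, u$. Applying $t^s$ then yields $t^s(\eta u) = t^{r+s} \otimes D_i(f,r)\, u$, the remaining term, and the identical argument with $\Delta_\alpha(f,r)$ and $D_0(f,r)$ disposes of the other two cases. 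Finally, since $\A \# U(\V)$ is generated by $\A$ and $\V$, and the $\A$-action is already encoded in the tensor structure via $t^p g\cdot(t^s \otimes u) = t^{p+s} \otimes g u$ for $g \in \La$, these formulas indeed recover the entire $\A \# U(\V)$-action.

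The main obstacle will be bookkeeping rather than depth: one must correctly read $D_i(f,r)$, $\Delta_\alpha(f,r)$, $D_0(f,r)$ as elements of $\AVZ \subset \A \# U(\V)$ and respect the order of operations in the smash product, where the $\V$-part acts before the $\A$-part multiplies, together with the fact that $\AVZ$ preserves the weight space $U$. One must also check that the Grassmann factor $f$ in $t^r f d_i$ may legitimately be transferred through $\otimes_\La$ onto $U$; the parity signs in the Leibniz rule, by contrast, are inert here since $t^s$ is even.
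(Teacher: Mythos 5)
Your proposal is correct and follows exactly the route the paper intends: its proof is the one-line remark that the formulas ``follow immediately from the definition of $\AVZ$ and from the $\AV$-module structure on $M$,'' and your computation (Leibniz rule with $g = t^s$, evaluation of $\eta(t^s)$, and unwinding $D_i(f,r)u = t^{-r}\cdot(t^r f d_i\cdot u) \in U$) is precisely that verification spelled out. The bookkeeping points you flag (order of operations in the smash product, $\AVZ$ preserving $U$, transferring $f\in\La$ across $\otimes_\La$) are handled correctly.
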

\begin{proof}
This follows immediately from the definition of $\AVZ$ and from $\AV$-module structure on $M$.
\end{proof}
We get an obvious corollary:
\begin{corollary}
$M = \A \otimes_\La U$ is an irreducible $\AV$-module if and only if $U$ is an irreducible $\DD(\Lambda, \AVZ)$-module.
\end{corollary}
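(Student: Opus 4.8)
The plan is to exploit Proposition~\ref{recover}, which shows that the $\AV$-module structure on $M$ is completely encoded by the joint $\La$- and $\AVZ$-actions on $U = M_\lambda$, together with the earlier observation that $M \cong \R_m \otimes U$ as a vector space. Consequently the weight spaces of $M$ are $M_{\lambda + s} = t^s \otimes U$ for $s \in \Z^m$, and each monomial $t^s \in \A$ acts bijectively from $M_\lambda$ onto $M_{\lambda+s}$. Throughout I use that an $\AV$-submodule is the same thing as an $\A \# U(\V)$-submodule.

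For the forward implication I would argue by contraposition. Suppose $U' \subset U$ is a nonzero proper $\DD(\La, \AVZ)$-submodule, i.e.\ a subspace stable under both $\La$ and $\AVZ$. Set $M' = \A \otimes_\La U'$; since $\A$ is free over $\La$ this is a subspace of $M = \A \otimes_\La U$, nonzero and proper because $U' \neq 0, U$. The formulas of Proposition~\ref{recover} express the action of each generator $t^r f d_i$, $t^r f \pd_\alpha$, $t^r f d_0$ on $t^s \otimes u$ in terms of the $\La$-action $f u$ and the $\AVZ$-actions $D_i(f,r) u$, $\Delta_\alpha(f,r) u$, $D_0(f,r) u$; since $U'$ is stable under all of these, $M'$ is an $\AV$-submodule. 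Hence $M$ is reducible.

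For the reverse implication, assume $U$ is $\DD(\La, \AVZ)$-irreducible and let $M' \subset M$ be a nonzero $\AV$-submodule. First I would note that $M'$ is a weight submodule: it is invariant under the semisimply acting Cartan $\hp$, so $M' = \bigoplus_s (M' \cap M_{\lambda+s})$. Put $U' = M' \cap M_\lambda \subset U$. Because $\La$ and $\AVZ$ both preserve the weight space $M_\lambda$ and preserve $M'$, the subspace $U'$ is a $\DD(\La, \AVZ)$-submodule, so $U' = 0$ or $U' = U$. If $U' = U$ then $1 \otimes U \subset M'$, and since $M'$ is an $\A$-submodule and $M$ is generated by $1 \otimes U$ over $\A$, we conclude $M' = M$. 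If instead $U' = 0$, I use bijectivity of the $t^s$-action: for any $v \in M' \cap M_{\lambda+s}$ the element $t^{-s} v$ lies in $M' \cap M_\lambda = 0$, forcing $v = 0$; thus $M' = 0$, contradicting $M' \neq 0$.

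The routine points --- that the recovered action preserves $\A \otimes_\La U'$, and that $M'$ decomposes into weight spaces --- are immediate from the setup. The step that deserves the most care is the $U' = 0$ case of the reverse direction: the conclusion $M' = 0$ relies essentially on the fact that $M$ is induced from the single weight space $U$ with the $t^s$ acting bijectively between weight spaces, which is precisely what forces a submodule meeting $M_\lambda$ trivially to vanish. This is where the concrete structure $M \cong \R_m \otimes U$ is indispensable.
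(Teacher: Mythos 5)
Your proof is correct and takes essentially the same route as the paper: the paper presents this corollary as an immediate consequence of Proposition~\ref{recover}, the point being precisely the bijection you describe between $\AV$-submodules of $M$ and $\DD(\La, \AVZ)$-submodules of $U = M_\lambda$. The details you supply --- that a submodule of $M$ is a weight submodule, that $U' = M' \cap M_\lambda$ is stable under $\La$ and $\AVZ$, and that the bijective action of the monomials $t^s$ between weight spaces forces $M'=0$ when $U'=0$ --- are exactly the steps the paper leaves implicit.
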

\begin{remark}
% Let $\A = \R_m \otimes \Lambda$ and $\V = W(m, n)$  or $W(m,n) \ltimes \A d_0$.
% There is a natural action of $(\A \# \V)_0$ on $\Lambda$, as well as the action 
% of $\Lambda$ on $(\A \# \V)_0$.
% This allows us to consider the algebra of differential operators $\DD( \Lambda, (\A \# \V)_0 )$. 
It is not difficult to show that we have in fact an isomorphism of associative algebras:
\begin{equation}
\label{iso}
\A \# U(\V) \cong \A \mathop\otimes\limits_{\Lambda} \DD( \Lambda, (\A \# \V)_0 ).
\end{equation}
Here the algebra on the right is the quotient of $\A \# U(\AVZ)$ by the ideal generated by 
$a f \# x - a \# f x$, with $a \in \A$, $x \in \AVZ$, $f \in \Lambda$.
Isomorphism (\ref{iso}) allows us to interpret $M = \A \otimes_\La U$ as an $\A \# U(\V)$-module induced
from $\DD( \Lambda, (\A \# \V)_0 )$-module $U$. 
\end{remark}

Since $U$ is a weight space for $\V$, we also have
$$D_i (1, 0)\vert_U = \lambda_i \Id_U,  D_0 (1, 0)\vert_U = \lambda_0 \Id_U.$$

% The $\AV$-action  on $M = \A \otimes_{\La} U$ may be recovered from the joint actions of $\La$, $\AVZ$ on $U$ {\color{blue}where the $\V$-action is given by}:
% \begin{align}
% \label{recover1}
% t^r f d_i (t^s \otimes u) =& t^r d_i (t^s) \otimes f u
% + t^{r+s} \otimes D_i (f,r) u \\
% \label{recover2}
% t^r f \pd_\alpha (t^s \otimes u) =&  t^{r+s} \otimes \Delta_\alpha (f,r) u.
% \end{align} 
% Hence, $M$ is an irreducible $\AV$-module if and only if $U$ is an irreducible
% $(\Lambda, \AVZ)$-module.    {\color{green}Checked Up to here !!!}

% Denote by $\eps_1,  \ldots, \eps_m$ the standard basis of $\Z^m$.  {\color{red} To be continued !}

\begin{theorem}
Let $U$ be a finite-dimensional $\AVZ$-module. Then the action of $\AVZ$ is polynomial, i.e.,
\begin{align}
\label{E1}
D_i (f,r) =& \sum_{k\in \Z^m_+} \frac{r^k}{k!} d_i (f, k-\eps_i), \\
\label{E2}
\Delta_\alpha (f,r) =& \sum_{k\in \Z^m_+} \frac{r^k}{k!} \pd_\alpha (f, k),\\
\label{E3}
D_0 (f,r) =& \sum_{k\in \Z^m_+} \frac{r^k}{k!} d_0 (f, k),
\end{align}
where $d_i (f, k), \pd_\alpha (f, k), d_0 (f, k) \in \End U$ with $d_i (f, k) = 0, 
\pd_\alpha (f, k) = 0$, $d_0 (f, k) = 0$ for $k \gg 0$.
\end{theorem}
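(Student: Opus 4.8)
The plan is to view each structure map $r \mapsto D_i(f,r)$, $r \mapsto \Delta_\alpha(f,r)$, $r \mapsto D_0(f,r)$ as a function $\Z^m \to \End U$ and to prove that each is a \emph{polynomial} map. A polynomial on $\Z^m$ has only finitely many nonzero divided-difference coefficients, so relabelling these coefficients as $d_i(f,k)$, $\pd_\alpha(f,k)$, $d_0(f,k)$ produces exactly the expansions \eqref{E1}--\eqref{E3} together with their vanishing for $k \gg 0$. The standing fact I use throughout is that the image $\mathfrak g := \rho(\AVZ) \subseteq \gl(U)$ of the representation $\rho$ of $\AVZ$ on $U$ is finite-dimensional, and that all three families take values in this fixed finite-dimensional space $\mathfrak g$, which is closed under the brackets of Proposition~\ref{rel}.

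First I would extract one-variable difference equations by bracketing each generator on the left with the fixed elements $D_j(1,\eps_j)$. Writing $\nabla_j \Phi(s) = \Phi(s + \eps_j) - \Phi(s)$, relations \eqref{DV}, \eqref{DDz} and the $i \neq j$ case of \eqref{DD} all collapse (the terms proportional to $\pd_\alpha(1)=0$, or to the vanishing component $(\eps_j)_i = 0$ for $i\neq j$, drop out) to the homogeneous form
\[
 s_j\, \nabla_j \Phi(s) = X_j\, \Phi(s), \qquad X_j := \ad \rho\bigl(D_j(1,\eps_j)\bigr),
\]
valid when $\Phi$ is one of $\Delta_\alpha(g,\cdot)$, $D_0(g,\cdot)$, or $D_i(g,\cdot)$ with $i \neq j$; the diagonal case $\Phi = D_j(g,\cdot)$ of \eqref{DD} yields instead the inhomogeneous variant $(s_j - 1)\nabla_j \Phi(s) = (X_j + 1)\Phi(s) - g\, D_j(1,\eps_j)$. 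Note that \eqref{VV} is already purely graded in $r+s$, so the odd generators are the cleanest instance. Since $\nabla_j$ commutes with the fixed operator $X_j$, iterating the homogeneous equation gives the identity $\prod_{l=0}^{k-1}(s_j + l)\, \nabla_j^{\,k}\Phi(s) = \prod_{l=0}^{k-1}(X_j - l)\, \Phi(s)$, so $\Phi$ is polynomial of degree $< k$ in the variable $s_j$ as soon as $\prod_{l=0}^{k-1}(X_j - l)$ annihilates the linear span of the values $\{\Phi(s)\}$.

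The heart of the argument, and the step I expect to be the main obstacle, is to show that this span meets only the generalized $X_j$-eigenspaces for non-negative integer eigenvalues, with a bound on those eigenvalues depending only on $\dim U$. I would do this by decomposing $\mathfrak g$ into generalized $X_j$-eigenspaces $\mathfrak g^{[\mu]}$, observing that the difference equation preserves this decomposition, and then analysing the component $\Phi_\mu$ through the recursion together with its behaviour at the singular integers $s_j = 0, -1, \ldots$ obtained by evaluating the relation there; the finite-dimensionality of $U$ is precisely what excludes the quasi-polynomial (exponential-times-polynomial) solutions and pins down the finitely many surviving resonances. Granting this, $\prod_{l}(X_j - l)$ kills the span for large $k$, so $\Phi$ is polynomial in each $r_j$ separately, with degree bounded by a function of $\dim \mathfrak g \le (\dim U)^2$ uniformly in the remaining coordinates; such separate polynomiality with a uniform degree bound upgrades to joint polynomiality of $r \mapsto \Phi(r)$ on $\Z^m$. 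The inhomogeneous diagonal case reduces to the homogeneous one after subtracting a particular polynomial solution, and the divided-difference coefficients of the resulting polynomials are the desired operators $d_i(f,k)$, $\pd_\alpha(f,k)$, $d_0(f,k)$, automatically zero for $k \gg 0$. Everything apart from the spectral/resonance step is bookkeeping with Proposition~\ref{rel}, but that step is where $\dim U < \infty$ must be used in an essential way.
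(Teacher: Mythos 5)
Your reduction steps are correctly executed: the brackets with $D_j(1,\eps_j)$ do collapse, via (\ref{DD})--(\ref{DDz}), to the difference equations you state (the inhomogeneous diagonal variant checks out as well), and the iteration identity $\prod_{l=0}^{k-1}(s_j+l)\,\nabla_j^{\,k}\Phi(s) = \prod_{l=0}^{k-1}(X_j-l)\,\Phi(s)$ is right. The genuine gap is exactly the step you flag and then assume (``Granting this''): the claim that finite-dimensionality of $U$ forces the span of the values of $\Phi$ into generalized $X_j$-eigenspaces with bounded non-negative integer eigenvalues. As stated --- using only the one difference equation per direction plus finite-dimensionality of the target --- this claim is false. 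In the scalar case take $X_j = \mu$ with $\mu\notin\Z_{\geq 0}$, and set $\Phi(s) = \Gamma(s_j+\mu)/\Gamma(s_j)$ for $s_j\geq 1$ and $\Phi(s)=0$ for $s_j\leq 0$: this solves $s_j\nabla_j\Phi = X_j\Phi$ at every integer (including the singular points $s_j\leq 0$), takes values in a one-dimensional space, and is not a polynomial. So neither finite-dimensionality nor the behaviour at the singular integers can ``pin down the resonances''; to exclude such solutions one must use relations beyond the single bracket with $D_j(1,\eps_j)$ --- that is, the interplay of the whole family $D_j(1,r)$, $r\in\Z^m$, acting on $U$ --- and your sketch gives no indication of how. (Note also that the dangerous solutions here are of Gamma-ratio type, with growth $s_j^{\mu}$, not the ``exponential-times-polynomial'' class your sketch aims to exclude.)

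The missing lemma is precisely the content of the external result the paper's proof starts from: Theorem 3.1 of \cite{B2}, which asserts that the operators $D_i(1,s)$ act polynomially in $s$ on any finite-dimensional module, and whose proof is a genuinely nontrivial analysis of the full system of relations. Given that input, the paper's argument is only an easy bootstrap: from $[D_i(1,s),D_i(f,0)] = -s_iD_i(f,s)+s_if D_i(1,s)$ one gets polynomiality of $s_iD_i(f,s)$; the bracket $[D_i(1,s-\eps_i),D_i(f,\eps_i)]$ then isolates $D_i(f,s)$ itself; $[D_i(1,s-\eps_i),\Delta_\alpha(f,\eps_i)] = \Delta_\alpha(f,s)-\Delta_\alpha(f,\eps_i)$ handles the odd family; and $[\Delta_\alpha(f,s),D_0(\xi_\alpha,0)] = D_0(f,s)$ handles $D_0$. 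So your proposal reproduces, in difference-equation language, the part of the argument that is routine, while the step you leave open is exactly the theorem being invoked. To repair the proof you should either quote \cite{B2} for the base family $\{D_i(1,s)\}$ and then run your bootstrap from there, or genuinely reprove that result --- which requires substantially more than the spectral decomposition with respect to a single $X_j$ that you describe.
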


We use a shift by $\eps_i$ in the definition of $d_i (f, k)$ in order to exhibit a natural $\Z^m$-grading on a new Lie 
superalgebra that will be defined using the elements in the right hand sides of (\ref{E1})-(\ref{E3}).

\begin{proof}
By Theorem 3.1 in \cite{B2}, for $i = 1, \ldots, m$, the action of $\{ D_i (1, s) \}$ on $U$ is polynomial in $s$. Using Proposition \ref{rel}, we compute:
$$[D_i (1,s) , D_i (f, 0)] = -s_i D_i (f,s) + s_i f D_i (1,s).$$
Since the left hand side and the last summand of the right hand side are polynomial in $s$, we conclude that $s_i D_i (f,s)$ is  polynomial in $s$.

Next, 
\begin{align*}
[D_i (1,s-\eps_i) , D_i (f, \eps_i)] = & 2  D_i (f,s) -  D_i (f, \eps_i) \\
&-s_i D_i (f,s) + (s_i -1) f D_i (1,s-\eps_i).
\end{align*}
In the above equality, all terms except $D_i (f,s)$ are known to be polynomial, hence $D_i (f,s)$ is a polynomial in $s$.

From 
$$[D_i (1,s-\eps_i) , \Delta_\alpha (f, \eps_i)] = \Delta_\alpha  (f,s) -  
\Delta_\alpha  (f, \eps_i),$$
we conclude that $ \Delta_\alpha  (f,s)$ is also a polynomial in $s$.

In case when $\V = \Wz$, we also need to establish polynomiality of $D_0(f, s)$. This follows from the equality
$$[\D_\alpha(f, s), D_0 (\xi_a, 0)] = D_0 (f, s).$$
\end{proof}

\begin{theorem}
\label{brackets}
Operators $d_i (f, k)$, $\pd_\alpha(f, k)$, and $d_0 (f, k)$
acting on $U$ satisfy the following commutator relations:
\begin{equation}
\label{R1}
[ d_i (f, -\eps_i), d_j (g, -\eps_j) ] = 0,
\end{equation}
\begin{align*}
[ d_i (f, -\eps_i), d_j (g, k) ] =& (k_i + \delta_{ij}) d_j (fg, k-\eps_i) 
\numberthis \label{R2} \\
&-  (k_i + \delta_{ij}) f d_j(g, k-\eps_i),
\, {\text \ for \ } k \neq -\eps_j,
\end{align*}
\begin{align*}
[ d_i (f, \ell), d_j (g, k) ] =& k_i d_j (fg, \ell + k)
\numberthis \label{R3} \\
& - \ell_j d_i(fg, \ell + k),
\, {\text \ for \ } \ell \neq -\eps_i, k \neq -\eps_j, 
\end{align*}
\begin{align*}
\label{R4} \numberthis
[ d_i (f, -\eps_i), \pd_\alpha (g, k) ] =& k_i \pd_\alpha (fg, k-\eps_i) 
 -  k_i f \pd_\alpha (g, k-\eps_i) \\
& - (-1)^{\p{f}+\p{g}} d_i(\pd_\alpha (f) g, k -\eps_i),
\end{align*}
\begin{align*}
\label{R5}  \numberthis
[ d_i (f, \ell), \pd_\alpha (g, k) ] =& k_i \pd_\alpha (fg, \ell + k) \\
&- (-1)^{\p{f}+\p{g}} d_i (\pd_\alpha (f) g, \ell + k),
\, {\text \ for \ } \ell \neq -\eps_i,
\end{align*}
\begin{align*}
\label{R6}  \numberthis
[ \pd_\alpha (f, \ell), \pd_\beta (g, k) ] =& \pd_\beta  (f\pd_\alpha(g), \ell + k) \\
&- (-1)^{\p{f}+1} \pd_\alpha (\pd_\beta (f) g, \ell + k),
\end{align*}
\begin{align}
\label{R0a}  
[ d_i (f, -\eps_i), d_0 (g, k) ] = k_i  d_0 (fg, k-\eps_i) -  k_i f d_0 (g, k-\eps_i) 
\end{align}
\begin{align}
\label{R0b}
[ d_i (f, \ell), d_0 (g, k) ] = k_i d_0 (fg, \ell + k),
\, {\text \ for \ } \ell \neq -\eps_i
\end{align}
\begin{align}
\label{R0c}  
[ \pd_\alpha (f, \ell), d_0 (g, k) ] =d_0  (f\pd_\alpha(g), \ell + k),
\end{align}
\begin{align}
\label{R0d}  
[ d_0 (f, \ell), d_0 (g, k) ] = 0,
\end{align}
\begin{equation}
\label{R7}
[ \pd_\alpha (f, 0), g ] = f\pd_\alpha(g),  
\end{equation}
 \begin{equation}
\label{R8}
[ \pd_\alpha (f, k), g ] = 0,  
\, {\text \ for \ } k \neq 0,
\end{equation}
 \begin{equation}
\label{R9}
[ d_i (f, k), g ] = 0, \ [ d_0 (f, k), g ] = 0.
\end{equation}
\end{theorem}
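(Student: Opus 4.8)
The plan is to obtain each of the identities \eqref{R1}--\eqref{R0d} by inserting the polynomial expansions \eqref{E1}--\eqref{E3} into the corresponding bracket relation of Proposition \ref{rel} and matching coefficients of the monomials $r^a s^b$ on the two sides. By the preceding theorem each of $D_i(f,r)$, $\D_\alpha(f,r)$, $D_0(f,r)$ is a polynomial in $r$ with only finitely many nonzero coefficient operators, so both sides of every relation in Proposition \ref{rel} are genuine polynomial functions of $(r,s)\in\Z^m\times\Z^m$. As they agree on all integer points, a Zariski dense set, they agree as polynomials, and this legitimizes the term-by-term comparison. The remaining identities \eqref{R7}--\eqref{R9} will instead be read off from the compatibility \eqref{LLambda1} between $\AVZ$ and the $\La$-action on $U$.

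The computational core is a reindexing exercise. Using $(r+s)^c/c!=\sum_{a+b=c} r^a s^b/(a!\,b!)$, the coefficient of $r^a s^b/(a!\,b!)$ in $D_i(fg,r+s)$ is $d_i(fg,a+b-\eps_i)$, and similarly for $\D_\alpha$ and $D_0$, which carry no $\eps$-shift. The scalar prefactors that occur in Proposition \ref{rel} act as shift operators on these coefficients: the prefactor $s_i$ shifts the $s$-degree by $\eps_i$ and, after renormalizing by $a!\,b!$, contributes a numerical factor $b_i$, while $r_j$ shifts the $r$-degree by $\eps_j$ and contributes $a_j$. Writing $\ell=a-\eps_i$ and $k=b-\eps_j$ (or the analogue with fewer shifts when a $\D_\alpha$ or $D_0$ is involved), the matched coefficients reproduce the right-hand sides of the theorem.

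The case distinctions in the statement are dictated precisely by which summands survive these shifts. In \eqref{DD} the boundary terms $-s_i f D_j(g,s)$ and $(-1)^{\p{f}\p{g}} r_j g D_i(f,r)$ carry no $r$- (respectively $s$-) dependence, so they feed only the coefficients with $a=0$ (respectively $b=0$); this is exactly why the extra multiplication terms are present in \eqref{R1} and \eqref{R2} but absent from \eqref{R3}. The regimes $a=b=0$, then $a=0\neq b$ together with its mirror image, and finally $a,b\neq0$ thus produce \eqref{R1}, \eqref{R2} and \eqref{R3}; likewise \eqref{DV} splits as \eqref{R4} ($a=0$) and \eqref{R5} ($a\neq0$), and \eqref{DDz} as \eqref{R0a}/\eqref{R0b}, while \eqref{VV}, \eqref{VDz} and \eqref{DzDz} involve no $\eps$-shifted operator and give the single relations \eqref{R6}, \eqref{R0c} and \eqref{R0d}. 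The one genuinely delicate point is \eqref{R3}: there the $s_i$- and $r_j$-shifts produce $b_i=k_i+\delta_{ij}$ and $a_j=\ell_j+\delta_{ij}$, and one must observe that the spurious $\delta_{ij}$ terms cancel because $\delta_{ij}(d_j-d_i)=0$.

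For \eqref{R7}--\eqref{R9} the relation \eqref{LLambda1} shows that, for $\eta\in\AVZ$ and $g\in\La$ acting on $U$ by multiplication, $[\eta,g]=\eta(g)$, the derivation action of $\eta$ on $\La$. Since $D_i(f,r)$ and $D_0(f,r)$ carry only the even derivations $d_i$ and $d_0$, which annihilate $\La$, this action vanishes for every $r$, and expanding in powers of $r$ gives \eqref{R9}; for $\D_\alpha(f,r)$ the derivation action equals $f\pd_\alpha$ independently of $r$, so expanding $\sum_k (r^k/k!)\,[\pd_\alpha(f,k),g]=f\pd_\alpha(g)$ yields \eqref{R7} from the constant term and \eqref{R8} from the vanishing of all higher ones. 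The main obstacle throughout is the bookkeeping of the $\eps_i$-shifts and the check that the boundary and $\delta_{ij}$ contributions land exactly where the statement predicts; no ingredient beyond Proposition \ref{rel}, the polynomiality theorem and \eqref{LLambda1} is needed.
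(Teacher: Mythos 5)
Your proposal is correct and follows essentially the same route as the paper's proof: substitute the polynomial expansions \eqref{E1}--\eqref{E3} into the relations \eqref{DD}--\eqref{DzDz}, use that two polynomials agreeing on $\Z^{2m}_+$ must coincide, and equate coefficients of $s^p r^q$ case by case ($p=q=0$, $p=0\neq q$, and $p,q\neq 0$). You even supply two details the paper leaves implicit: the cancellation of the spurious $\delta_{ij}$ terms in \eqref{R3} via $\delta_{ij}(d_j-d_i)=0$, and the derivation of \eqref{R7}--\eqref{R9} from the compatibility relation \eqref{LLambda1}.
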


\begin{proof}
Let us derive relations (\ref{R1})-(\ref{R3}). We take polynomial expansions of (\ref{DD}):
\begin{align*}
&\left[ \sum_{p \in \Z^m_+} \frac{s^p}{p!} d_i (f, p -\eps_i),
\sum_{q \in \Z^m_+} \frac{r^q}{q!} d_j (g, q -\eps_j) \right] \\
&{\hskip 1cm}= r_i \sum_{l \in \Z^m_+} \frac{(r+s)^l}{l!} d_j (fg, l - \eps_j) 
-  r_i \sum_{l \in \Z^m_+} \frac{r^l}{l!} f d_j (g, l - \eps_j) \\
&{\hskip 1cm}- s_j \sum_{l \in \Z^m_+} \frac{(r+s)^l}{l!} d_i (fg, l - \eps_j) 
+ (-1)^{\p{f} \p{g}} s_j \sum_{l \in \Z^m_+} \frac{s^l}{l!} g d_i (f, l - \eps_i) . 
\end{align*}
Two polynomials in $2m$ variables that have equal values on $\Z^{2m}_+$ must be equal as 
polynomials, which allows us to equate their coefficients. Equating the terms with 
$s^0 r^0$ we immediately get (\ref{R1}). Extracting terms with $s^0 r^q$ with $q\neq 0$ we get
\begin{align*}
\frac{1}{q!} [ d_i(f, -\eps_i), &d_j(g, q - \eps_j) ] \\
&= \frac{1}{(q-\eps_i)!} d_j (fg, q - \eps_i - \eps_j) 
-   \frac{1}{(q-\eps_i)!} f d_j (g, q - \eps_i - \eps_j). 
\end{align*}
Multiplying both sides by $q!$ and setting $k = q - \eps_j$, we obtain (\ref{R2}).
In the same way, equating the terms with $s^p r^q$ where $p, q \neq 0$, we establish 
(\ref{R3}). 
Derivation of the remaining relations of Theorem \ref{brackets} is analogous, and we omit these calculations.
\end{proof}

Denote by $\LL$ Lie superalgebra with basis 
$$\{ \xi^r d_i (\xi^s, k-\eps_i), \, \xi^r d_0 (\xi^s, k), \, \xi^r \pd_\alpha (\xi^s, k) \}$$   
where $r, s \in \{ 0, 1 \}^n$, $k\in\Z^m_+$, $1 \leq i \leq m$, $1\leq \alpha \leq n \}$
and Lie brackets defined by (\ref{R1})-(\ref{R9}).
In fact, $\LL$ may be defined as a jet Lie superalgebra corresponding to $\AVZ$ (see 
\cite{B3} for the general construction of the jet Lie algebra).

Note that elements $\{ d_i (1, -\eps_i), d_0 (1,0),  \}$ are central in $\LL$.

Lie algebra $\LL$ has a Cartan subalgebra 
$$\HH = \Span \{ d_i (1,0), \pd_\alpha(\xi_\alpha, 0), d_0 (1,0) \}$$
which is diagonalizable in the adjoint representation:
\begin{align*}
[ d_i (1,0), \xi^r d_j (1, -\eps_j) ] =& 0, \\
[ \pd_\alpha (\xi_\alpha,0), \xi^r d_j (1, -\eps_j) ] =& 
r_\alpha \xi^r d_j (1, -\eps_j), \\
[ d_i (1,0), \xi^s d_j (\xi^r, -\eps_j) - \xi^s \xi^r d_j (1, -\eps_j)  ] =&
- \delta_{ij} \left( \xi^s d_j (\xi^r, -\eps_j) - \xi^s \xi^r d_j (1, -\eps_j) \right), \\
[ \pd_\alpha (\xi_\alpha,0),  \xi^s d_j (\xi^r, -\eps_j) - \xi^s \xi^r d_j (1, -\eps_j)  ] =&
(s_\alpha + r_\alpha)   \left( \xi^s d_j (\xi^r, -\eps_j) - \xi^s \xi^r d_j (1, -\eps_j) \right), \\
[ d_i (1,0), \xi^r d_j (\xi^s, k) ] =& k_i  \xi^r d_j (\xi^s, k)
{\text \ for \ } k \neq -\eps_j, \\
[ \pd_\alpha (\xi_\alpha,0),  \xi^r d_j (\xi^s, k) ] =& (r_\alpha + s_\alpha)
  \xi^r d_j (\xi^s, k),
{\text \ for \ } k \neq -\eps_j, \\
[ d_i (1,0), \xi^r \pd_\beta (\xi^s, k) ] =& k_i   \xi^r \pd_\beta (\xi^s, k), \\
[ \pd_\alpha (\xi_\alpha,0),  \xi^r \pd_\beta (\xi^s, k) ] =& 
(r_\alpha + s_\alpha - \delta_{\alpha \beta})  \xi^r \pd_\beta (\xi^s, k), \\
[ d_i (1,0), \xi^r d_0 (\xi^s, k) ] =& k_i  \xi^r d_0 (\xi^s, k), \\
[ \pd_\alpha (\xi_\alpha,0),  \xi^r d_0 (\xi^s, k) ] =& (r_\alpha + s_\alpha)  \xi^r d_0 (\xi^s, k),
\end{align*}
and $d_0 (1, 0)$ is central.

{\bf Example.} Consider $\AV$-module $M = W(m, n)$ with the adjoint action of $W(m, n)$,
natural left multiplication action of $\A$, and in case when $\V = \Wz$, we define the action of
$a d_0$ as left multiplication by $\sd a$, $a \in \A$, where $\sd$ is some fixed scalar, $\sd \in \C$.  
Let us fix $r \in\Z^m$ and take as $U$ the corresponding root space:
$$ U = \sum_{i=1}^m t^r \La d_i \oplus \sum_{\alpha=1}^n t^r \La \pd_\alpha.$$
We see that $U$ is a free $\La$-module of rank $m+n$. Let us compute the action
of $\AVZ$ on $U$:
\begin{align*}
D_i (f,s) t^r g d_j =& t^{-s} [t^s f d_i, t^r g d_j] = r_i t^r fg d_j - s_j t^t fg d_i,
\\
D_i (f,s) t^r g \pd_\beta =&  r_i t^r fg \pd_\beta
 - (-1)^{\p{f}+\p{g}} t^r \pd_\beta(f) g d_i,
\\
\Delta_\alpha (f,s) t^r g d_j =& t^r f \pd_\alpha (g) d_j
 - (-1)^{\p{g}} s_j t^r fg \pd_\alpha,
\\
\Delta_\alpha (f,s) t^r g \pd_\beta =&  t^r f \pd_\alpha (g)  \pd_\beta
+ (-1)^{\p{f}} t^r \pd_\beta(f) g \pd_\alpha ,
\\
D_0 (f,s) t^r g d_j =& \sd t^r fg d_j,
\\
D_0 (f,s) t^r g \pd_\beta =&  \sd t^r fg \pd_\beta.
\end{align*}

Taking formal expansions (\ref{E1})-(\ref{E3}) of $D_i (f,s)$, $\Delta_\alpha (f,s)$, and $D_0 (f,s)$
 in powers of $s$ and comparing them with the right hand sides of the above equalities, we derive the action on $U$ of Lie superalgebra $\LL$:
\begin{align*}
d_i (f, -\eps_i) t^r g d_j =& r_i t^r fg d_j ,  \\
d_i (f, -\eps_i) t^r g \pd_\alpha =&   r_i t^r fg \pd_\alpha 
- (-1)^{\p{f} + \p{g}} t^r \pd_\beta (f) g d_i, \\
d_i (f, \eps_a - \eps_i) t^r g d_j =& - \delta_{aj} t^r f g d_i, \\
d_i (f, \eps_a - \eps_i) t^r g \pd_\alpha =& 0, \\
d_i (f, k-\eps_i) =& 0 \text{ \ for \ } |k| > 1, \\
\pd_\alpha (f, 0) t^r g d_j =& t^r f \pd_\alpha(g) d_j, \\
\pd_\alpha (f, 0) t^r g \pd_\beta =&  t^r f \pd_\alpha(g) \pd_\beta 
+ (-1)^{\p{f}} t^r \pd(f) g \pd_\alpha , \\
\pd_\alpha (f, \eps_a) t^r g d_j =& - \delta_{aj} (-1)^{\p{g}} t^r f g \pd_\alpha ,\\
\pd_\alpha (f, \eps_a) t^r g \pd_\beta =& 0 , \\
\pd_\alpha(f, k ) =& 0 \text{\ for \ } |k| > 1, \\
d_0 (f, 0) t^r g d_j =& \sd  t^r fg d_j, \\
d_0 (f, 0) t^r g  \pd_\alpha =& \sd  t^r fg  \pd_\alpha, \\
d_0 (f, k) =& 0 \text{\ for \ } |k| > 0.
\end{align*}

Denote by $J_{r, \sd}$ the kernel in $\LL$ of this representation. To make this ideal independent of $r$ and $\sd$, we set $J = \mathop\cap\limits_{r\in\Z^m, \sd\in\C} J_{r, \sd}$.
Next Proposition is an immediate consequence of the above formulas:
\begin{proposition}
\label{rel}
(i) The quotient Lie superalgebra $\LL/J$ is a free $\La$-module with generators
$$ \{ d_i (1,-\eps_i), d_i (\xi_\beta, -\eps_i), d_i (1, \eps_j - \eps_i),
\pd_\alpha (1, 0), \pd_\alpha(\xi_\beta, 0), \pd_\alpha(1, \eps_j), d_0(1, 0) \}$$
with 
$i, j = 1, \ldots, m, \alpha, \beta = 1, \ldots, n.$

(ii) The following relations determine $\LL/J$ as a $\La$-module:
\begin{align*}
&d_i (f, k-\eps_i) = 0, \ \pd_\alpha (f, k) = 0 \text{ \ for \ } |k| > 1,  \\
&d_i (f, \eps_j - \eps_i) = f d_i (1, \eps_j - \eps_i), \\
&\pd_\alpha (f, \eps_j) = f \pd_\alpha (1, \eps_j), \\
&d_i (f, -\eps_i) = f d_i (1, -\eps_i) + \sum_{\beta=1}^n 
(f) \rpd_\beta \left( d_i (\xi_\beta, -\eps_i) -  \xi_\beta d_i (1, -\eps_i) \right) , \\
&\pd_\alpha (f, 0) = f \pd_\alpha (1, 0) + \sum_{\beta=1}^n 
(f) \rpd_\beta \left( \pd_\alpha (\xi_\beta, 0) -  \xi_\beta \pd_\alpha (1, 0) \right), \\
&d_0 (f, k) = 0 \text{ \ for \ } |k| > 0, \\
& d_0 (f, 0) = f d_0 (1, 0). 
\end{align*}
\end{proposition}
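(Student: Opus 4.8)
The plan is to read everything off the action of $\LL$ on the module $U$ of the Example, using that $J=\bigcap_{r\in\Z^m,\,\sd\in\C}J_{r,\sd}$ is by construction the common kernel of these representations. Consequently $\LL/J$ embeds into $\prod_{r\in\Z^m,\,\sd\in\C}\End U_{r,\sd}$, where $U_{r,\sd}$ denotes the space $U=\sum_i t^r\La d_i\oplus\sum_\alpha t^r\La\pd_\alpha$ equipped with the $\LL$-action computed in the Example. To prove both parts it therefore suffices to (a) check that the two sides of each relation in (ii) act identically on every $U_{r,\sd}$, so that their difference lies in $J$, and (b) show that the elements listed in (i) are $\La$-linearly independent modulo $J$; freeness then follows from spanning, a consequence of (ii), together with independence.

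For (a), the vanishing relations $d_i(f,k-\eps_i)=0$ and $\pd_\alpha(f,k)=0$ for $|k|>1$, as well as $d_0(f,k)=0$ for $|k|>0$, are already displayed in the Example. The relations $d_i(f,\eps_j-\eps_i)=f\,d_i(1,\eps_j-\eps_i)$, $\pd_\alpha(f,\eps_j)=f\,\pd_\alpha(1,\eps_j)$ and $d_0(f,0)=f\,d_0(1,0)$ follow at once, since for these weights the operators merely rescale the output by a scalar independent of $r$ and $\sd$ and commute with left multiplication by $\La$. The only genuine computation is for the two Leibniz-type relations. For $d_i(f,-\eps_i)$ I would evaluate both sides on the basis vectors $t^r g\,d_j$ and $t^r g\,\pd_\alpha$: on the former the correction $\sum_\beta (f)\rpd_\beta\big(d_i(\xi_\beta,-\eps_i)-\xi_\beta d_i(1,-\eps_i)\big)$ annihilates $t^r g\,d_j$ and both sides reduce to $r_i t^r fg\,d_j$, while on the latter the correction produces exactly the derivation contribution $-(-1)^{\p f+\p g}t^r\pd_\alpha(f)g\,d_i$ once one substitutes $(f)\rpd_\alpha=(-1)^{\p f+1}\pd_\alpha(f)$. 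The relation for $\pd_\alpha(f,0)$ is verified the same way.

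For spanning, the relations of (ii) reduce every $\La$-module generator of $\LL$ modulo $J$ to an $\La$-combination of the seven families in (i): generators with $|k|>1$, respectively $|k|>0$ for $d_0$, vanish; a generator $d_i(\xi^s,\eps_j-\eps_i)$ becomes $\xi^s d_i(1,\eps_j-\eps_i)$ and $\pd_\alpha(\xi^s,\eps_j)$ becomes $\xi^s\pd_\alpha(1,\eps_j)$; the weight-zero generators $d_i(\xi^s,-\eps_i)$ and $\pd_\alpha(\xi^s,0)$ expand through the two Leibniz formulas into combinations of $d_i(1,-\eps_i),d_i(\xi_\beta,-\eps_i)$ and $\pd_\alpha(1,0),\pd_\alpha(\xi_\beta,0)$; and $d_0(\xi^s,0)=\xi^s d_0(1,0)$. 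Hence the listed elements $\La$-span $\LL/J$.

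Finally, for independence I would use the embedding $\LL/J\hookrightarrow\prod\End U_{r,\sd}$ and test a hypothetical nontrivial $\La$-relation among the generators on the vectors $t^r g\,d_j$ and $t^r g\,\pd_\alpha$. The explicit dependence on $r_i$ of the weight-$(-\eps_i)$ operators separates them from the weight-$(\eps_j-\eps_i)$ operators, which are independent of $r$; the targets $d_i$ versus $\pd_\alpha$ separate the $d$-families from the $\pd$-families; and varying $g\in\La$ together with the freeness of $U$ over $\La$ forces each $\La$-coefficient to vanish. The main obstacle is purely bookkeeping: keeping the super signs of left $\La$-multiplication straight in the Leibniz identities, where the conversion $(f)\rpd_\beta=(-1)^{\p f+1}\pd_\beta(f)$ is essential, and choosing enough test vectors and values of $r$ to disentangle all generators in the independence step. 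There is no conceptual difficulty beyond this, which is why the statement is an immediate consequence of the Example.
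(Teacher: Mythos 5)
Your proposal is correct and follows essentially the same route as the paper: the paper's one-line proof ("an immediate consequence of the above formulas") refers precisely to the Example's computed action of $\LL$ on $U_{r,\sd}$, together with the definition $J=\bigcap_{r,\sd}J_{r,\sd}$ as the common kernel, which is exactly the embedding $\LL/J\hookrightarrow\prod_{r,\sd}\End U_{r,\sd}$ you use to verify the relations, spanning, and $\La$-independence. You have simply supplied the bookkeeping (including the correct sign conversion $(f)\rpd_\beta=(-1)^{\p{f}+1}\pd_\beta(f)$ in the Leibniz-type relations) that the paper leaves implicit.
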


\begin{theorem}
\label{kernel}
Ideal $J$ is in the kernel of any finite-dimensional simple $\LL$-module.
\end{theorem}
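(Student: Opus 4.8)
The plan is to produce a $\Z$-grading of $\LL$ in which the ideal $J$ occupies only strictly positive degrees, and then to kill $J$ on a finite-dimensional simple module by a nilpotency argument. First I would take the Cartan element
$$ H = \sum_{i=1}^m d_i(1,0) + \sum_{\alpha=1}^n \pd_\alpha(\xi_\alpha,0) \in \HH $$
and grade $\LL$ by the eigenvalues of $\ad H$; since $\HH$ is diagonalizable in the adjoint representation, this is a genuine $\Z$-grading $\LL=\bigoplus_d\LL_d$. Reading off the Cartan brackets listed before the Example, one finds that $\xi^r d_i(\xi^s,k-\eps_i)$ and $\xi^r\pd_\alpha(\xi^s,k)$ have $\ad H$-degree $|r|+|s|+|k|-1$, while $\xi^r d_0(\xi^s,k)$ has degree $|r|+|s|+|k|$ (the even central elements $d_i(1,-\eps_i)$ being the sole exception, of degree $0$). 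The role of the fermionic number operator $\sum_\alpha\pd_\alpha(\xi_\alpha,0)$ is precisely to pull the linear odd fields $\pd_\alpha(1,\eps_j)$ and the $\gl(n)$-part $\pd_\alpha(\xi_\beta,0)$ down to degree $0$, so that the entire ``$\gl(m,n)$ plus centre'' piece sits in degrees $-1$ and $0$.

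The key structural step is to verify, using the description of $\LL/J$ in Proposition \ref{rel}, that $J\subseteq\bigoplus_{d\geq1}\LL_d$. Because $J$ is an ideal and $H\in\LL$, it is $\ad H$-invariant and hence graded, $J=\bigoplus_d(J\cap\LL_d)$; so it suffices to show $J\cap\LL_d=0$ for $d\leq0$. This follows because every defining relation of $\LL/J$ --- the vanishing of $d_i(f,k-\eps_i)$, $\pd_\alpha(f,k)$ for $|k|>1$ and of $d_0(f,k)$ for $|k|>0$, together with the $\La$-linearity relations for $d_i(f,\eps_j-\eps_i)$, $\pd_\alpha(f,\eps_j)$, $d_0(f,0)$, $d_i(f,-\eps_i)$ and $\pd_\alpha(f,0)$ --- is, on the computation above, $\ad H$-homogeneous of degree $\geq1$, each becoming nontrivial only once $|f|$ or the jet order is large enough to force the degree to exceed $0$. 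Hence no relation is supported in degrees $\leq0$, the generators listed in Proposition \ref{rel} account for all of $\LL_{-1}$ and $\LL_0$, and $J$ avoids these degrees.

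Finally I would let $W$ be a finite-dimensional simple $\LL$-module and conclude $JW=0$. Since $\dim W<\infty$, the operator $H$ has finitely many generalized eigenvalues on $W$, and any homogeneous $x\in J$, having $\ad H$-degree $\geq1$, maps the generalized $\mu$-eigenspace into the generalized $(\mu+\deg x)$-eigenspace, strictly raising the eigenvalue. Therefore the associative subalgebra of $\End W$ generated by $J$ is nilpotent. As $J$ is an ideal, $JW$ is an $\LL$-submodule --- for $x\in\LL$, $j\in J$ one has $x(jw)=[x,j]w\pm j(xw)\in JW$ since $[x,j]\in J$ --- and nilpotency forces $JW\neq W$, so simplicity of $W$ gives $JW=0$. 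The one genuine obstacle is the first paragraph: the naive ``$t$-order'' grading by $\sum_i d_i(1,0)$ alone does not work, because the odd raising fields $\pd_\alpha(1,\eps_j)$ that belong to $\gl(m,n)$ also raise $t$-degree and would be mistaken for elements of $J$; balancing $t$-order against the fermionic number is exactly what forces the honest higher jets, and nothing else, into positive degree.
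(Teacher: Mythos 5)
Your proof is correct, and while the first half coincides with the paper's argument, the finishing step is genuinely different and more self-contained. Your grading element $H$ is exactly the element $I = I' + I''$ used in the paper, and your claim that $J$ sits in strictly positive $\ad H$-degrees is precisely the paper's assertion that $\ad(I)$ acts on $J$ diagonally with positive integer eigenvalues, read off from Proposition \ref{rel}. Where you diverge is in how this positivity kills $J$ on a finite-dimensional simple module: the paper must first pass to a \emph{finite-dimensional quotient} of $\LL$ acting on $U$, which it does by invoking Lemma 3.4 of \cite{B2} (applied to the $I'$-grading) to annihilate all jets of order $|k|>N$, and only then applies the Cheng--Kac lemma (\cite{CK}, Lemma 1) with ${\mathfrak n}$ the image of $J$ (nilpotent, hence solvable) and ${\mathfrak a}=\C I$. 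You bypass both citations by working directly in $\End W$: homogeneous elements of $J$ strictly raise generalized $H$-eigenvalues, of which there are finitely many, so the associative algebra generated by the image of $J$ is nilpotent, $JW$ is a proper submodule, and simplicity gives $JW=0$. Your route never needs finite-dimensionality of the algebra, only of the module, and is elementary; the paper's route is shorter to write given the two references and places the result in the standard framework of solvable ideals acting trivially on irreducible modules. One bookkeeping caveat: your degree formula $|r|+|s|+|k|-1$ for $\xi^r d_i(\xi^s,k-\eps_i)$ fails at $k=0$ when $s\neq 0$, since those basis elements are not $\ad H$-homogeneous; the actual eigenvectors are the combinations $\xi^r\bigl(d_i(\xi^s,-\eps_i)-\xi^s d_i(1,-\eps_i)\bigr)$, of degree $|r|+|s|-1$, as in the paper's displayed adjoint-action formulas. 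This does not damage your argument, because the relation elements spanning $J$ (which are nontrivial only for $|s|\geq 2$ in that case) are honestly homogeneous of degree $\geq 1$, so the inclusion $J\subseteq\bigoplus_{d\geq 1}\LL_d$ stands.
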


\begin{proof}
Let $U$ be a  finite-dimensional simple $\LL$-module.
Consider the following elements of the Cartan subalgebra of $\LL$:
$$I^\prime = \sum_{i=1}^m d_i (1, 0), \ 
I^{\prime\prime} = \sum_{\alpha=1}^m \pd_\alpha (\xi_\alpha, 0),
\ I = I^\prime + I^{\prime\prime} .$$
Each of these elements defines a $\Z$-grading on $\LL$, and each of these gradings starts at $-1$.

 Applying Lemma 3.4 from \cite{B2}, to $\Z$-grading induced by $I^\prime$, we see that for some $N$ a proper ideal containing
$$\{ f d_i (g, k), \ f \pd_\alpha (g, k), \  f d_0 (g, k) \, | \, |k| > N \}$$
vanishes in $U$. The quotient by this ideal is a finite-dimensional Lie algebra, which acts on $U$, and we can now apply the following result:
\begin{lemma} (\cite{CK}, Lemma 1). 
Let ${\mathfrak g}$ be a finite-dimensional Lie superalgebra and let ${\mathfrak n}$ be a solvable ideal of ${\mathfrak g}$. Let ${\mathfrak a}$ be an even subalgebra of ${\mathfrak g}$ such that ${\mathfrak n}$ is a completely reducible $ad ({\mathfrak a})$-module with no trivial summand. Then ${\mathfrak n}$ acts trivially in any irreducible finite-dimensional ${\mathfrak g}$-module $U$.
\end{lemma}
From Proposition \ref{rel} we can see that $\ad(I)$ acts on $J$ in a diagonal way with positive integer eigenvalues. This implies that 
 the image of $J$ in the finite-dimensional quotient of $\LL$ is nilpotent
and by previous Lemma, $J$ vanishes in every simple finite-dimensional $\LL$-module module $U$.
\end{proof}

The following Proposition may be shown by a direct computation using Proposition \ref{rel} and Theorem \ref{brackets}.

\begin{proposition}
As a $\La$-module, $\LL / J$ is generated by three supercommuting subalgebras:

(1) Central ideal spanned by $\{ d_i (1, -\eps_i), d_0 (1, 0) \}$,

(2) Abelian subalgebra spanned by  $\{ \pd_\alpha (1, 0) \}$,

(3) A subalgebra isomorphic to $gl(m,n)$ and spanned by the following 
elements:
\begin{align*}
&e_{ij} = d_j (1, \eps_i - \eps_j), \\
&e_{\alpha\beta} = \pd_\beta (\xi_\alpha, 0) - \xi_\alpha \pd_\beta (1, 0), \\
&e_{i\beta} = \pd_\beta (1, \eps_i), \\
&e_{\alpha j} = d_j (\xi_\alpha, -\eps_j) - \xi_\alpha d_j (1, -\eps_j).
\end{align*}
\end{proposition}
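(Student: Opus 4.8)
The plan is to deduce all three assertions directly from the presentation of $\LL/J$ as a free $\La$-module in Proposition \ref{rel} and the commutator relations of Theorem \ref{brackets}. First I would settle the $\La$-module generation. By Proposition \ref{rel}(i) the module $\LL/J$ is free over $\La$ on the list $\{ d_i(1,-\eps_i),\, d_i(\xi_\beta,-\eps_i),\, d_i(1,\eps_j-\eps_i),\, \pd_\alpha(1,0),\, \pd_\alpha(\xi_\beta,0),\, \pd_\alpha(1,\eps_j),\, d_0(1,0)\}$, so it is enough to rewrite each of these generators in terms of the three subalgebras. This is immediate from the definitions: $d_i(1,-\eps_i)$ and $d_0(1,0)$ belong to (1), $\pd_\alpha(1,0)$ belongs to (2), $d_i(1,\eps_j-\eps_i)=e_{ji}$ and $\pd_\alpha(1,\eps_j)=e_{j\alpha}$ belong to (3), while $d_i(\xi_\beta,-\eps_i)=e_{\beta i}+\xi_\beta d_i(1,-\eps_i)$ and $\pd_\alpha(\xi_\beta,0)=e_{\beta\alpha}+\xi_\beta\pd_\alpha(1,0)$ are $\La$-combinations of generators of (3) together with (1) and (2). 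Thus the three subalgebras generate $\LL/J$ over $\La$.

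Next I would verify that the three subalgebras supercommute pairwise. Subalgebra (1) is central in $\LL$ (as was already recorded), hence central in $\LL/J$, so it supercommutes with (2) and (3) and forms a central ideal. Abelianness of (2) is the special case $f=g=1$, $\ell=k=0$ of (\ref{R6}), where both terms vanish since $\pd_\gamma(1)=0$. To see that (2) supercommutes with (3) I would compute $[\pd_\gamma(1,0),x]$ for $x$ ranging over the four families of generators of (3), using (\ref{R4})--(\ref{R6}) for the inner parts and (\ref{R7})--(\ref{R9}) for the interaction with $\La$-coefficients; the correction terms $-\xi_\alpha\pd_\beta(1,0)$ and $-\xi_\alpha d_j(1,-\eps_j)$ are precisely what makes these brackets vanish. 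For example $[\pd_\gamma(1,0),\pd_\beta(\xi_\alpha,0)]=\delta_{\gamma\alpha}\pd_\beta(1,0)$ by (\ref{R6}), whereas the Leibniz rule governed by (\ref{R7}) gives $[\pd_\gamma(1,0),\xi_\alpha\pd_\beta(1,0)]=\delta_{\gamma\alpha}\pd_\beta(1,0)$, so the two contributions to $[\pd_\gamma(1,0),e_{\alpha\beta}]$ cancel.

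Finally I would establish that (3) is a subsuperalgebra isomorphic to $gl(m,n)$ by computing every bracket among $e_{ij},e_{\alpha\beta},e_{i\beta},e_{\alpha j}$ from Theorem \ref{brackets} and matching it against the defining relations $[e_{ab},e_{cd}]=\delta_{bc}e_{ad}-(-1)^{(\p{a}+\p{b})(\p{c}+\p{d})}\delta_{ad}e_{cb}$ of $gl(m,n)$, with roman indices even and greek indices odd. I expect this to be the main obstacle: it is a finite but sign-sensitive super-computation in which the $\xi_\alpha$-corrections must again produce clean cancellations, so that each bracket closes inside (3) with no residual term from the central ideal (1) or the abelian part (2). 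Organizing the check by the parities of the four indices, reducing products $\xi_\beta d_i(1,-\eps_i)$ and $\xi_\beta\pd_\alpha(1,0)$ by the relations of Proposition \ref{rel}, and keeping the sign conventions of the right derivation $\rpd_\alpha$ straight, should render each of the finitely many cases routine.
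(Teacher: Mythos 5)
Your proposal is correct and follows essentially the same route as the paper: the paper proves this Proposition precisely by a direct computation from the free $\La$-module presentation of $\LL/J$ (Proposition \ref{rel}) and the commutator relations of Theorem \ref{brackets}, which is exactly the reduction and case-by-case verification you outline. Your added details (the rewriting of the free generators in terms of the three subalgebras, and the cancellation in $[\pd_\gamma(1,0),e_{\alpha\beta}]$ coming from the $\xi_\alpha$-correction terms) are accurate instances of the computation the paper leaves to the reader.
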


\begin{theorem}
Let $U$ be a simple finite-dimensional module for $\DD(\La, \LL)$.
% vector space which admits the action of a commutative unital superalgebra $\La$ and a Lie superalgebra $\LL$
% where $\La$-module structures on $U$ and $\LL$ are compatible
% \begin{equation*}
% (f \eta) u = f (\eta u), \text{\ for \ } \eta \in\LL, f\in \La, u\in U,
% \end{equation*}
% and that satisfy (\ref{R7})-(\ref{R9}). Suppose that $U$ has no non-trivial subspaces, invariant under both 
% actions. 
Assume that operators 
$\{ d_i (1,-\eps_i) \}$ act on $U$ as scalars $\lambda_i \Id_U$ with $\lambda \neq 0$,
and $d_0(1, 0)$ acts on $U$ as $\sd \Id_U$.

Then there exists a finite-dimensional irreducible $gl(m,n)$-module $(V, \rho)$ 
such that $U \cong \La \otimes V$. The action of $\LL$ integrates to the action 
of $\AVZ$ on $\La \otimes V$ in the following way:
\begin{align*}
&D_j (f,s) = \, \, \lambda_j f \otimes \Id_V  \, + \,
\sum_{i=1}^m s_i f \otimes \rho (e_{ij}) 
 + \sum_{\alpha = 1}^n (f)\rpd_\alpha \otimes \rho(e_{\alpha j}) , \\
&\Delta_\beta (f,s) =  f \frac{\pd}{\pd \xi_\beta}  \otimes \Id_V + \sum_{i=1}^m s_i f \otimes \rho (e_{i\beta}) 
+ \sum_{\alpha = 1}^n (f)\rpd_\alpha \otimes \rho(e_{\alpha \beta}), \\
&D_0 (f,s) = \, \, \sd f \otimes \Id_V.
\end{align*}  
\end{theorem}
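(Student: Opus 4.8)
The plan is to first collapse $\LL$ onto its finite-dimensional quotient, then to recognise the decomposition $U\cong\La\otimes V$ as forced by a Clifford/Weyl-algebra argument, and finally to read off the explicit operators from the structural relations. Since $U$ is a finite-dimensional simple $\LL$-module, Theorem~\ref{kernel} shows that $J$ acts by zero, so $U$ is in fact a module over $\DD(\La,\LL/J)$. By the Proposition describing $\LL/J$ as three supercommuting subalgebras, $\LL/J$ is generated over $\La$ by the central ideal (1), the abelian subalgebra (2) spanned by the $\pd_\alpha(1,0)$, and a copy (3) of $gl(m,n)$; by hypothesis the elements of (1) act by the scalars $\lambda_i$ and $\sd$.

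\emph{The key step: $U\cong\La\otimes V$.} Consider the associative subalgebra $\mathcal{W}\subset\DD(\La,\LL)$ generated by left multiplication by $\La$ together with the odd elements $\pd_\alpha(1,0)$. By (\ref{R7}) each $\pd_\alpha(1,0)$ acts on $\La$ as the derivation $\pd/\pd\xi_\alpha$, so the operators $\{L_{\xi_\alpha},\pd_\alpha(1,0)\}$ satisfy the canonical anticommutation relations and generate the full endomorphism superalgebra $\End(\La)$ of the $2^n$-dimensional space $\La$. This is a simple associative superalgebra whose unique irreducible module is $\La$ itself. Hence, as a $\mathcal{W}$-module, $U$ is a direct sum of copies of $\La$, and setting $V=\Hom_{\mathcal{W}}(\La,U)$ we obtain $U\cong\La\otimes V$, with $\La$ acting by left multiplication on the first factor and $\pd_\alpha(1,0)$ as $\frac{\pd}{\pd\xi_\alpha}\otimes\Id_V$. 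This structural identification is the heart of the argument and the main obstacle; everything that follows is bookkeeping.

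\emph{The $gl(m,n)$-module $V$.} Using (\ref{R7})--(\ref{R9}) one checks that every generator $e_{ij},e_{\alpha\beta},e_{i\beta},e_{\alpha j}$ of the subalgebra (3) supercommutes with multiplication by $\La$, and, by the supercommutativity of (2) and (3), with each $\pd_\alpha(1,0)$; thus (3) supercommutes with all of $\mathcal{W}=\End(\La)$ and therefore acts in its supercommutant, i.e.\ as $\Id_\La\otimes\rho$ for a representation $(V,\rho)$ of $gl(m,n)$. Since $\DD(\La,\LL/J)$ is generated over $\La$ by (1), (2), (3), its image on $U\cong\La\otimes V$ is generated by $\End(\La)\otimes\Id_V$, the scalars from (1), and $\Id_\La\otimes\rho(gl(m,n))$. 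Consequently a proper nonzero $gl(m,n)$-submodule $V'\subsetneq V$ would yield the proper nonzero submodule $\La\otimes V'\subset U$, and conversely; simplicity of $U$ is therefore equivalent to simplicity of $V$, so $(V,\rho)$ is a finite-dimensional simple $gl(m,n)$-module.

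\emph{The explicit formulas.} It remains to substitute. By the polynomial expansions (\ref{E1})--(\ref{E3}) together with Proposition~\ref{rel}(ii), all terms with $|k|>1$ vanish, so each of $D_j(f,s)$, $\Delta_\beta(f,s)$, $D_0(f,s)$ is a finite sum over $|k|\le 1$. Inserting the identifications $d_j(1,-\eps_j)=\lambda_j\Id$, $d_0(1,0)=\sd\Id$, $e_{ij}=d_j(1,\eps_i-\eps_j)$, $e_{\alpha j}=d_j(\xi_\alpha,-\eps_j)-\xi_\alpha d_j(1,-\eps_j)$, $e_{i\beta}=\pd_\beta(1,\eps_i)$, and $e_{\alpha\beta}=\pd_\beta(\xi_\alpha,0)-\xi_\alpha\pd_\beta(1,0)$, and evaluating the resulting operators on $\La\otimes V$ (where $\La$ multiplies the first factor, $(f)\rpd_\alpha$ denotes the right derivative, and $gl(m,n)$ acts through $\rho$ on the second factor) yields exactly the three stated formulas after a direct verification of the signs. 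This also makes transparent the sense in which the $\LL$-action \emph{integrates}: the polynomial formulas (\ref{E1})--(\ref{E3}) are now genuine finite polynomials in $s$, hence define operators $D_j(f,s),\Delta_\beta(f,s),D_0(f,s)$ for every $s\in\Z^m$, recovering the full $\AVZ$-action.
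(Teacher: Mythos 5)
There is one genuine gap, and it is signaled by the fact that your argument never invokes the hypothesis $\lambda\neq 0$. Your opening sentence treats $U$ as ``a finite-dimensional simple $\LL$-module,'' but the hypothesis only says that $U$ is simple over $\DD(\La,\LL)$, and Theorem \ref{kernel} applies to $\LL$-simple modules. The implication ``$\DD(\La,\LL)$-simple $\Rightarrow$ $\LL$-simple'' is false in general: for $\lambda=0$, $\sd=0$ and $V$ the trivial one-dimensional $gl(m,n)$-module, $U=\La$ is $\DD(\La,\LL)$-simple (your own Clifford argument shows this), yet $\C\cdot 1$ is a proper nonzero $\LL$-submodule, since $\LL$ then acts only through the operators $f\pd_\alpha$, which kill the constants. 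The hypothesis $\lambda\neq 0$ exists precisely to close this gap: since $f\,d_i(1,-\eps_i)\in\LL$ acts on $U$ as multiplication by $\lambda_i f$, and $\lambda_i\neq 0$ for some $i$, multiplication by every $f\in\La$ lies in the image of $U(\LL)$, so every $\LL$-submodule is automatically a $\DD(\La,\LL)$-submodule and $U$ is indeed $\LL$-simple. This is a one-line repair, but it must be made; it is the only place in the whole proof where the hypothesis is used.

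With that fixed, the rest of your proof is correct and follows the same skeleton as the paper's (kill $J$, split off $\La$, let $gl(m,n)$ act on the multiplicity space, match simplicity, substitute into the truncated expansions), differing only in how $U\cong\La\otimes V$ is obtained. The paper takes $V$ to be the joint kernel of the $\pd_\alpha(1,0)$, shows it is nonzero because these operators anticommute and square to zero, and verifies by a differentiation argument that $\bigoplus_p \xi^p V$ is direct, $\LL/J$-stable, hence all of $U$. You instead note that $L_{\xi_\alpha}$ and $\pd_\alpha(1,0)$ satisfy the canonical anticommutation relations by (\ref{R7}), so they generate a copy of the simple associative algebra $\End(\La)\cong M_{2^n}(\C)$, whence $U$ is $\La$-isotypic with multiplicity space $V=\Hom_{\mathcal W}(\La,U)$. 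This Morita-style packaging buys you something: the fact that every $\DD(\La,\LL/J)$-submodule of $U$ has the form $\La\otimes V'$ comes for free from semisimplicity over $\mathcal W$, whereas the paper generates $U$ from $V$ by hand using the elements $f\,d_i(1,-\eps_i)$ (which again needs $\lambda\neq 0$); your simplicity equivalence works uniformly. The supercommutant identification of the $gl(m,n)$-action (modulo the usual sign bookkeeping in the super double-centralizer) and the final substitution into (\ref{E1})--(\ref{E3}) are routine, as you say.
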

\begin{proof}
Since elements $f  d_i (1,-\eps_i) \in \LL$ act on $U$ as multiplication by $\lambda_i f$ and $\lambda_i \neq 0$ for some $i$, we see that $\La$-action is encoded in the action of $\LL$. Hence $U$ is an irreducible finite-dimensional $\LL$-module. By Theorem  \ref{kernel} it is a simple $\LL/J$-module.

We are going to show that subalgebras spanned by $\{ f d_i (1, -\eps_i) \}$ and
 $\{ \pd_\alpha (1, 0) \}$ define a structure of a free $\La$-module on $U$, with 
$ f d_i (1, -\eps_i)$ acting as multiplications by $\lambda_i f$ and 
$\pd_\alpha (1, 0)$ acting as $\frac{\pd}{\pd\xi_\alpha}$.

Let $V$ be the joint kernel of  $\{ \pd_\alpha (1, 0) \}$. It is easy to see that this subspace is non-zero. Indeed, we can start with any non-zero vector in $U$ and act repeatedly by elements $\{ \pd_\alpha (1, 0) \}$. Since these operators anticommute and $\pd_\alpha (1, 0)^2 = 0$, we will end up with a non-zero vector in $V$.
Since $\{ \pd_\alpha (1, 0) \}$ commute with the $gl(m,n)$ subalgebra in $\LL$, 
$V$ admits a structure of a $gl(m,n)$-module. 

Set $V_p = \xi^p V$. By (\ref{R7}) we have 
$$ [ \pd_\alpha (1, 0), \xi^p ] = \pd_\alpha (\xi^p) .$$
The standard differentiation argument shows that such subspaces form a direct sum:
$$ \mathop\oplus\limits_{p \in \{ 0,1\}^n} V_p \cong \La \otimes V. $$
This direct sum is invariant under $\LL/J$, hence it coincides with $U$.

If $V^\prime$ is a $gl(m,n)$-submodule in $V$ then $\La \otimes V^\prime$ is an
$\LL/J$-submodule in $U$. Hence $V$ must be irreducible as a $gl(m,n)$-module.
It is also easy to see that the converse holds as well: if $V$ is irreducible as a $gl(m,n)$-module then $U = \La \otimes V$ is irreducible as an $\LL/J$-module.
Indeed, starting with an arbitrary non-zero vector in $U$ we can apply a sequence 
of operators $\{ \pd_\alpha (1, 0) \}$ to get a non-zero vector in $V$. Then using 
$gl(m,n)$ action we can get all of $V$, and finally acting with elements $\{ f d_i (1, -\eps_i) \}$ we can generate $U$. This shows that any non-zero vector in $U$ generates $U$, hence $U$ is an irreducible $\LL/J$-module.

Using expansions (\ref{E1}),  (\ref{E2}) and combining our results about the action of $\LL$ on $U$ we obtain the last claim of the Theorem.
\end{proof}

% \begin{definition}
% We call $U$ a $(\Lambda, \LL)$-module, if it admits actions of Lie superalgebra $\LL$, commutative 
% superalgebra $\Lambda$, which satisfy (\ref{LLambda1})-(\ref{LLambda2}) and the action
% of $\LL$ on $\Lambda$ is given by  (\ref{R7})-(\ref{R9}).
% \end{definition}

Now we can obtain a description of simple weight $\AV$-modules of a finite rank, recovering the action of $\V$ from the action of $\AVZ$ by applying Proposition \ref{recover}.

\begin{theorem}
\label{tens}
Let $\V = W(m,n)$, $\A = \R_m \otimes \Lambda$.
 Every simple weight $\AV$-module of a finite rank is a tensor module. Such modules are parametrized by finite-dimensional simple $gl(m,n)$-modules $V$ and their supports $\lambda + \Z^m$. A tensor module is a tensor product
$$ \A \otimes V $$
with the following action of $\V = W(m,n)$:
\begin{align*}
(t^s f d_j) t^r g v =& (r_j + \lambda_j) t^{r+s} f g v \\
&+  \sum_{i=1}^m s_i t^{r+s} f  \rho (e_{ij}) g v
 + \sum_{\alpha = 1}^n t^{r+s} (f)\rpd_\alpha  \rho(e_{\alpha j}) g v , \\
(t^s f \pd_\beta)  t^r g v =&  t^{r+s} f \frac{\pd g}{\pd \xi_\beta}  v  \\
&+ \sum_{i=1}^m s_i  t^{r+s} f \rho (e_{i\beta}) g v
+ \sum_{\alpha = 1}^n  t^{r+s} (f)\rpd_\alpha \rho(e_{\alpha \beta}) g v.
\end{align*}   
\end{theorem}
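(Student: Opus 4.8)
The plan is to read Theorem \ref{tens} as a translation, via Proposition \ref{recover}, of the already-established description of the weight space $U = M_\lambda$ into an action of $\V = W(m,n)$ on the whole module. First I would note that a simple weight $\AV$-module $M$ of finite rank is finitely generated over $\A$, so by the Lemma characterizing strong Harish-Chandra modules it is such a module, and hence $M \cong \A \otimes_\La U$ with $U = M_\lambda$ finite-dimensional. Since the support of $M$ is a full coset $\lambda + \Z^m$, which is infinite, I may choose the base weight $\lambda$ to be nonzero, as demanded by the hypotheses of the preceding Theorem. By the Corollary, simplicity of $M$ is equivalent to simplicity of $U$ as a $\DD(\La, \AVZ)$-module; via the polynomiality expansions (\ref{E1})--(\ref{E3}) and the bracket relations of Theorem \ref{brackets} this $U$ is a finite-dimensional simple $\DD(\La, \LL)$-module, on which $J$ acts trivially by Theorem \ref{kernel}, and on which $\{d_i(1,-\eps_i)\}$ act as the scalars $\lambda_i$ with $\lambda \neq 0$ (since $d_i(1,-\eps_i)|_U = D_i(1,0)|_U = \lambda_i \Id_U$).

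With these hypotheses in place I would invoke the previous Theorem verbatim: it produces a finite-dimensional irreducible $gl(m,n)$-module $(V,\rho)$ together with an isomorphism $U \cong \La \otimes V$, and it gives the explicit operators $D_j(f,s)$ and $\Delta_\beta(f,s)$ on $\La \otimes V$. Combined with the identification $\A \otimes_\La U = \A \otimes_\La (\La \otimes V) \cong \A \otimes V$, this realizes $M$ on the space $\A \otimes V$.

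The computational heart is to feed these operators into Proposition \ref{recover}. Writing the module element as $t^r \otimes g v$ and the Lie algebra element as $t^s f d_j$ (respectively $t^s f \pd_\beta$), Proposition \ref{recover} gives $t^s f d_j(t^r \otimes gv) = r_j\, t^{r+s} \otimes fgv + t^{r+s}\otimes D_j(f,s)(gv)$ and $t^s f \pd_\beta(t^r \otimes gv) = t^{r+s}\otimes \Delta_\beta(f,s)(gv)$. Substituting the formulas for $D_j(f,s)$ and $\Delta_\beta(f,s)$ and merging the eigenvalue $r_j$ with the scalar $\lambda_j$ arising from the $\Id_V$-component reproduces exactly the two displayed actions in the statement. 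The one place requiring genuine care is the bookkeeping of the Koszul signs attached to the $(f)\rpd_\alpha \otimes \rho(e_{\alpha j})$ terms when the right derivation is commuted past $g$, using the sign rules for $\rpd_\alpha$ recorded in Section 2.

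For the parametrization claim I would observe that the coset $\lambda + \Z^m$ is recovered as $\supp M$, and that $V$ is recovered intrinsically as the joint kernel of $\{\pd_\alpha(1,0)\}$ equipped with its $gl(m,n)$-action, so that the pair $(V, \lambda + \Z^m)$ is a complete isomorphism invariant; one checks directly that the action formulas are unchanged under the normalization $\lambda \mapsto \lambda + s$ combined with the relabeling $t^r \mapsto t^{r-s}$, so that only the coset matters. Conversely, each such pair yields a simple module by the irreducibility half of the previous Theorem together with the Corollary. I expect the only real obstacle to be this sign and normalization bookkeeping in the substitution, since all the substantive structural input — polynomiality, the vanishing of $J$, and the $gl(m,n)$-reconstruction of $U$ — is already supplied by the earlier results.
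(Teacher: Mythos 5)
Your proposal is correct and follows essentially the same route as the paper: the paper obtains Theorem \ref{tens} precisely by chaining the finite-generation Lemma ($M \cong \A \otimes_\La U$), the Corollary reducing simplicity of $M$ to simplicity of $U$ over $\DD(\La,\AVZ)$, the polynomiality theorem and Theorem \ref{kernel} to pass to a simple $\DD(\La,\LL)$-module killed by $J$, the structure theorem giving $U \cong \La \otimes V$ for a simple $gl(m,n)$-module $V$ (using the normalization $\lambda \neq 0$), and finally Proposition \ref{recover} to reassemble the $W(m,n)$-action on $\A \otimes V$. Your additional remarks on recovering $(V,\lambda+\Z^m)$ as invariants and on the coset normalization fill in bookkeeping the paper leaves implicit, but introduce no new method.
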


We also get an analogous statement for $\Wz$.

\begin{theorem}
\label{tenz}
Let $\V = \Wz$, $\A = \R_m \otimes \Lambda$.
Every simple weight $\AV$-module $M$ of a finite rank is a tensor module for $W(m, n)$ as described in Theorem
\ref{tens}, $M = \A \otimes V$, with $\A d_0$ acting by multiplication on the left tensor factor:
$$(t^s f d_0) t^r g v = \sd t^{r+s} fg v $$
for some scalar $\sd \in \C$. 
\end{theorem}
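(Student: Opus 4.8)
The plan is to follow the same route as for Theorem \ref{tens}, treating the extra generator $d_0$ as a mild addition to the $W(m,n)$-analysis. First I would observe that a simple weight $\AV$-module $M$ of finite rank for $\V = \Wz$ is, by the Lemma identifying finitely generated weight modules with strong Harish-Chandra modules and the subsequent reduction, of the form $M \cong \A \otimes_\La U$ with $U = M_\lambda$ finite-dimensional, and we may assume $\lambda \neq 0$. By the Corollary, $M$ is simple precisely when $U$ is a simple $\DD(\La, \AVZ)$-module, and from the weight-space discussion we record that $D_0(1,0)$ acts on $U$ as a scalar $\sd \Id_U$, namely the $d_0$-weight of $M_\lambda$.

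Next I would invoke the polynomiality of the $\AVZ$-action (expansions (\ref{E1})--(\ref{E3})) to pass to the jet superalgebra $\LL$, so that $U$ becomes a simple finite-dimensional $\DD(\La, \LL)$-module. Since $d_0(1,0) = D_0(1,0)$ is central in $\LL$, Schur's lemma already forces it to act by the scalar $\sd$, consistent with the previous step. By Theorem \ref{kernel} the ideal $J$ acts trivially, so $U$ is a simple $\DD(\La, \LL/J)$-module, and the structural description of such modules (the theorem preceding Theorem \ref{tens}) yields a simple finite-dimensional $gl(m,n)$-module $(V, \rho)$ with $U \cong \La \otimes V$, together with the explicit formulas for $D_i(f,s)$, $\Delta_\beta(f,s)$, and, crucially, $D_0(f,s) = \sd f \otimes \Id_V$. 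That the $d_0$-action carries no $gl(m,n)$-contribution and no higher jets is exactly the content of the relations $d_0(f,k) = 0$ for $|k| > 0$ and $d_0(f,0) = f\, d_0(1,0)$ in $\LL/J$ (Proposition \ref{rel}).

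Finally I would reassemble the $\V$-action on $M = \A \otimes_\La U = \A \otimes V$ via Proposition \ref{recover}. The $W(m,n)$-generators reproduce verbatim the tensor-module formulas of Theorem \ref{tens}. For the new generator, Proposition \ref{recover} gives $t^r f d_0 (t^s \otimes g v) = t^{r+s} \otimes D_0(f,r)(g v) = \sd\, t^{r+s} fg\, v$, which is precisely the asserted multiplication action of $\A d_0$ on the left tensor factor.

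The step requiring the most care, rather than genuine difficulty, is checking that the machinery developed for $W(m,n)$ transfers unchanged to $\Wz$: this is nominally guaranteed by the running convention that all statements are proved simultaneously for both algebras, but one must confirm that introducing $d_0$ disturbs neither the closure of $\LL/J$ nor the integration argument. Since $D_0$ enters only through the supercommuting central and multiplicative relations (\ref{DDz})--(\ref{DzDz}) and their jet analogues (\ref{R0a})--(\ref{R0d}), the module $V$ and the scalars $\lambda$ are unaffected and $\sd$ appears purely as an extra parameter. Thus the only genuinely new output is the single scalar $\sd$, and the theorem follows.
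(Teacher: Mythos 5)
Your proposal is correct and takes essentially the same route as the paper: the entire machinery of Section 4 (the reduction to $U = M_\lambda$, the polynomiality expansions (\ref{E1})--(\ref{E3}), the jet superalgebra $\LL$ with the relations (\ref{R0a})--(\ref{R0d}), Theorem \ref{kernel}, and the structural theorem giving $D_0(f,s) = \sd f \otimes \Id_V$) is developed there simultaneously for $W(m,n)$ and $\Wz$, so the paper obtains Theorem \ref{tenz} exactly as you do, by reading off the $d_0$-component and reassembling the action via Proposition \ref{recover}.
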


We will refer to modules described in Theorem \ref{tenz} as tensor $\AV$-modules for $\V = \Wz$.  

\begin{theorem}
\label{cat}
Let $\V = W(m, n)$ or $\V = \Wz$ and $\A = \R_m \otimes \Lambda$.
A category of weight $\AV$-modules of a finite rank supported on $\lambda + \Z^m$ 
and $d_0$ acting as multiplication by $\lambda_0$
is equivalent to the category of finite-dimensional $\DD(\La, \LL)$-modules with the central elements $\{ d_i (1, -\eps_i) \}$ acting as multiplications by $\lambda_i$ and $\{ d_0 (1, 0) \}$ acting as multiplications by $\lambda_0$.

Given such a $\DD(\La, \LL)$-module $U$, the corresponding $\AV$-module is $M = \R_m \otimes U$ with the action of $\V = W(m,n)$ given as follows:
\begin{align*}
(t^s f d_j) t^r u =& r_j t^{r+s} f u 
+  \sum_{k \in\Z_+^m } \frac{s^k}{k!}  t^{r+s} \rho (d_j (f, k-\eps_j) ) u , \\
(t^s f \pd_\beta)  t^r u =&  \sum_{k \in\Z_+^m } \frac{s^k}{k!}  t^{r+s} 
\rho (\pd_\beta (f, k) )u  ,\\
(t^s f d_0) t^r u =& \sd t^{r+s} f u.
\end{align*}   
For every finite-dimensional $\LL$-module $U$ the sums in the right hand sides of the above formulas are finite. 
\end{theorem}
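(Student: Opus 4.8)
The plan is to set up two mutually inverse functors and check that they respect morphisms. For the functor from $\AV$-modules to $\DD(\La, \LL)$-modules, I would send a weight module $M$ of finite rank supported on $\lambda + \Z^m$ to its weight space $U = M_\lambda$. Finite rank makes $U$ finite-dimensional, and the compatibility relations (\ref{LLambda1})--(\ref{LLambda2}) make it a module over $\DD(\La, \AVZ)$. The theorem furnishing the expansions (\ref{E1})--(\ref{E3}) shows this action is polynomial, and Theorem \ref{brackets} shows that the coefficient operators $d_i(f,k)$, $\pd_\alpha(f,k)$, $d_0(f,k)$ obey the defining relations (\ref{R1})--(\ref{R9}) of $\LL$; thus $U$ becomes a $\DD(\La, \LL)$-module. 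Evaluating (\ref{E1}) and (\ref{E3}) at the trivial argument shows that $d_i(1,-\eps_i)$ and $d_0(1,0)$ act by $\lambda_i$ and $\lambda_0$, as required. For $\V = W(m,n)$ all expressions carrying the subscript $0$ are dropped, per the standing convention of this section.

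For the reverse functor, given a finite-dimensional $\DD(\La, \LL)$-module $U$ on which $d_i(1,-\eps_i)$ and $d_0(1,0)$ act by the prescribed scalars, I would first use the formulas (\ref{E1})--(\ref{E3}) to define operators $D_i(f,r)$, $\Delta_\alpha(f,r)$, $D_0(f,r)$ on $U$. These sums are finite: applying Lemma 3.4 of \cite{B2} to a $\Z$-grading of $\LL$ exactly as in the proof of Theorem \ref{kernel}, the coefficient operators vanish on $U$ once $|k|$ is large. Re-summing the relations (\ref{R1})--(\ref{R9}) then shows that these operators satisfy the $\AVZ$-relations (\ref{DD})--(\ref{DzDz}), so $U$ is a $\DD(\La, \AVZ)$-module. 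By the isomorphism (\ref{iso}) of the preceding Remark, $M = \A \otimes_\La U \cong \R_m \otimes U$ is then an $\A \# U(\V)$-module, i.e. an $\AV$-module; substituting (\ref{E1})--(\ref{E3}) into Proposition \ref{recover} produces exactly the displayed action formulas. The weight space of $M$ of weight $\lambda + r$ is $t^r \otimes U$, so $M$ is a weight module of finite rank supported on $\lambda + \Z^m$ with $d_0$ acting as $\lambda_0$, and the finiteness just established is the finiteness asserted at the end of the statement.

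To see that the two functors are mutually inverse, I would note first that starting from $U$ and forming $M = \R_m \otimes U$, the weight space $M_\lambda$ equals $1 \otimes U$; by Proposition \ref{recover} the $\AVZ$-action recovered on it is precisely the one built from the $\LL$-action, and re-expanding in $s$ returns the original operators, so $M_\lambda \cong U$ as $\DD(\La, \LL)$-modules. Conversely, for any weight $\AV$-module $M$ of finite rank supported on $\lambda + \Z^m$, the monomials $t^s$ act bijectively between weight spaces, giving $M \cong \R_m \otimes M_\lambda$; Proposition \ref{recover} shows that the $\V$-action on $M$ is entirely determined by the $\AVZ$-action on $M_\lambda$, so reconstruction returns $M$. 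Functoriality is immediate: a morphism of $\AV$-modules preserves weights and commutes with $\AVZ$, hence restricts to a $\DD(\La, \LL)$-morphism on the $\lambda$-weight spaces, while a $\DD(\La, \LL)$-morphism $\psi$ induces $\Id \otimes \psi$, which commutes with the displayed $\V$-action; these assignments are inverse to one another.

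The step I expect to be the main obstacle is the reverse direction of the local correspondence: verifying that the operators defined by (\ref{E1})--(\ref{E3}) genuinely satisfy the $\AVZ$-relations (\ref{DD})--(\ref{DzDz}). This is the converse of the calculation in the proof of Theorem \ref{brackets}, where those relations were expanded to extract (\ref{R1})--(\ref{R9}); here one must re-sum and confirm that nothing is lost. Because extracting coefficients and summing are mutually inverse operations on families of polynomials of bounded degree -- and the finiteness from Lemma 3.4 of \cite{B2} guarantees we are dealing with genuine polynomials -- this amounts to the assertion that $\LL$ is the jet Lie superalgebra of $\AVZ$; the verification is formal but must be carried out keeping careful track of the signs and of the shifts by $\eps_i$, and crucially without invoking simplicity, since the theorem concerns the entire category rather than just its simple objects.
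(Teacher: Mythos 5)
Your proposal is correct and follows essentially the route the paper intends: Theorem \ref{cat} is stated there without a separate proof, as a direct assembly of Proposition \ref{recover}, the polynomiality expansions (\ref{E1})--(\ref{E3}), Theorem \ref{brackets}, and the isomorphism (\ref{iso}), which is exactly how you argue. Your handling of the re-summation step and your observation that the graded annihilation argument (finiteness of the sums) requires no simplicity correctly supply the only details the paper leaves implicit.
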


\section{Cuspidal $W(m,n)$-modules}

The goal of this section is to prove the following:

\begin{theorem}
\label{quot}
Every non-trivial simple cuspidal strong Harish-Chandra module for $\V = W(m,n)$ or $\V = \Wz$ is a quotient of a tensor $\AV$-module.
\end{theorem}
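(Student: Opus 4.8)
The plan is to reduce the statement to the classification of finite-rank $\AV$-modules in Theorems~\ref{tens} and~\ref{tenz} by means of the \emph{$\A$-cover} construction of \cite{BF}, adapted to the super setting. A simple cuspidal $\V$-module $M$ carries no $\A$-action a priori, so the idea is to manufacture an $\AV$-module $\hM$ equipped with a surjective homomorphism of $\V$-modules $\hM \twoheadrightarrow M$, and then to show that cuspidality forces $\hM$ to be of finite rank. Once this is in place, $\hM$ is a strong Harish-Chandra $\AV$-module, hence of finite length with all composition factors simple finite-rank $\AV$-modules; by Theorems~\ref{tens} and~\ref{tenz} these are precisely tensor modules, and the statement will follow.

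First I would construct the cover, adapting \cite{BF}. The cover $\hM$ is built from the tensor-type $\AV$-module $\A\otimes M$, on which $\A$ acts on the left factor and $\V$ acts by the Leibniz rule $\eta(f\otimes v)=\eta(f)\otimes v+(-1)^{\p{\eta}\p{f}}f\otimes\eta v$; its defining feature is that it comes equipped with a canonical surjective homomorphism of $\V$-modules onto $M$. For $\V=\Wz$ the construction is identical, with $\A d_0$ acting through the left factor, so that the scalar $\sd$ by which $d_0$ acts on the simple module $M$ is inherited by $\hM$, as required by Theorem~\ref{tenz}. The non-triviality assumption on $M$ ensures that the central elements $d_i(1,-\eps_i)$ act by a nonzero weight $\lambda$, which is precisely the hypothesis under which the structure theory of the $\AV$-modules developed above applies.

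The hard part will be showing that $\hM$ has finite rank, i.e.\ is finitely generated over $\A$. This is where the uniform bound on the $\hp$-weight multiplicities of $M$ enters. The mechanism is the \emph{polynomiality} of the action: for a module with uniformly bounded weight multiplicities the operators $t^s f d_j$ and $t^s f \pd_\alpha$ depend polynomially on $s$ after extracting the obvious shift, in the manner of Theorem~3.1 of \cite{B2} (used already in the finite-rank $\AV$ setting) together with the support estimates of \cite{MZh}. Consequently only finitely many ``Fourier components'' of the action survive, and these generate $\hM$ as an $\A$-module. I expect this polynomiality-and-finite-generation step, carried out uniformly in both the even and the odd directions and for the two algebras $W(m,n)$ and $\Wz$, to be the principal technical obstacle.

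It then remains to assemble the pieces. Since $\hM$ is a finite-rank weight $\AV$-module, Theorem~\ref{cat} identifies it with a finite-dimensional $\DD(\La,\LL)$-module, which has finite length; transporting a composition series back through this equivalence, $\hM$ acquires a finite filtration by $\AV$-submodules whose successive quotients are simple finite-rank $\AV$-modules, hence tensor modules. Pushing this filtration forward along the surjection $\hM \twoheadrightarrow M$ yields a filtration of $M$ by $\V$-submodules; as $M$ is simple, there is a unique step at which the image jumps from $0$ to all of $M$, so $M$ is realized as a $\V$-quotient of the corresponding subquotient of $\hM$, that is, of a tensor $\AV$-module. The same argument, using Theorem~\ref{tenz} in place of Theorem~\ref{tens}, disposes of the case $\V = \Wz$.
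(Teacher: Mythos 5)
Your overall architecture matches the paper's (build an $\AV$-cover with a $\V$-surjection onto $M$, show the cover is strongly cuspidal of finite rank, then run a Jordan--H\"older argument and quote Theorems \ref{tens} and \ref{tenz}), and your final assembly paragraph is exactly the paper's concluding step. But there is a genuine gap at the foundation: the object you propose as the cover cannot play that role. The module $\A\otimes M$ with $\A$ acting on the left factor and $\V$ acting by the Leibniz rule $\eta(f\otimes v)=\eta(f)\otimes v+(-1)^{\p{\eta}\p{f}}f\otimes\eta v$ is indeed an $\AV$-module, but (i) it admits no canonical surjective $\V$-module map onto $M$ --- such a map would essentially require an $\A$-action on $M$, which is precisely what is missing; the natural candidate $f\otimes v\mapsto\chi(f)v$ for a character $\chi$ of $\A$ fails equivariance since $\chi(\eta(f))\neq 0$ in general --- and (ii) it is free over $\A$ of rank $\dim M=\infty$, with $\hp$-weight spaces $\bigoplus_{s\in\Z^m}t^s\La\otimes M_{\nu-s}$ that are infinite dimensional. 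So for your $\hM$ the key claim ``cuspidality of $M$ forces $\hM$ to have finite rank'' is simply false, and no polynomiality argument can repair it: the rank of a free module does not see the $\V$-action at all. You have conflated induction with coinduction.

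The correct construction (the paper's, following \cite{BF}) places the cover inside the \emph{coinduced} module $\Hom(\A,M)$ of Lemma \ref{coind}, where evaluation at $1\in\A$ supplies the canonical $\V$-morphism $\pi(\varphi)=\varphi(1)$; the cover is the much smaller subspace $\hM=\Span\{\psi(\tau,m)\}$ with $\psi(\tau,m)(g)=(-1)^{(\p{\tau}+\p{m})\p{g}}(g\tau)m$, whose image under $\pi$ is $\V M=M$ by simplicity and non-triviality of $M$. (Equivalently, $\hM$ is a quotient of the adjoint-type module $\V\otimes M$, with $\eta\cdot(\tau\otimes m)=[\eta,\tau]\otimes m\pm\tau\otimes\eta m$ --- not of your function-type module $\A\otimes M$.) Moreover, the mechanism that makes this cover strongly cuspidal is not Theorem 3.1 of \cite{B2}, which presupposes an $\AV$-module structure and hence the very $\A$-action you are trying to build; it is Proposition \ref{ann}: the differentiator elements $\sum_{a=0}^{N}(-1)^a\binom{N}{a}(t^{p}t_i^a\xi^r d_j)(t_i^{q-a}d_i)$ of $U(\V)$ (and their analogues with $\pd_\alpha$, $d_0$), obtained from Corollary 3.4 of \cite{BF} for $W(1)$-subalgebras, annihilate every cuspidal $\V$-module; these identities show each weight space of $\hM$ is spanned by the finitely many $\psi(t^{k-s}\xi^r d_j,M_{\lambda+s})$, $\psi(t^{k-s}\xi^r\pd_\alpha,M_{\lambda+s})$, $\psi(t^{k-s}\xi^r d_0,M_{\lambda+s})$ with $\left\| s\right\|\le N/2$. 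Once the cover and this step are in place, the remainder of your argument goes through as written.
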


In order to prove this result, we construct a functor from the category of $\V$-modules to the category of $\AV$-modules,
preserving the property of being (strongly) cuspidal. We will be following the ideas developed in \cite{BF}.

We begin with the coinduction functor. For a $\V$-module $M$, the coinduced $\AV$-module is the space $\Hom (\A, M)$
with the $\AV$-action defined in Lemma \ref{coind} below. Note that $\Hom (\A, M)$ is a $\Z_2$-graded space with
$\Hom_0 (\A, M)$ consisting of parity-preserving maps, and $\Hom_1 (\A, M)$ consisting of parity-reversing maps.

\begin{lemma}
\label{coind} Let $M$ be a $\V$-module. Then 

(a) $\Hom (\A, M)$ is an $\AV$-module with the action defined as follows:
\begin{align*}
(f \varphi) (g) =& (-1)^{\p{f} \p{\varphi}} \varphi (fg), \\
(\eta \varphi) (g) =& \eta \varphi(g) - (-1)^{\p{\eta}\p{\varphi}} \varphi(\eta(g)),
\end{align*}
where $\varphi \in \Hom (\A, M)$, $\eta \in \V$, $f,g\in \A$.

(b) There exists a $\V$-module homomorphism $\pi: \ \Hom (\A, M) \rightarrow M$,
defined by $\pi(\varphi) = \varphi(1)$.
\end{lemma}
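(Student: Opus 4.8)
The plan is to verify directly that the two formulas in part (a) define compatible actions of $\A$ and $\V$ making $\Hom(\A, M)$ into an $\AV$-module, and then to check that evaluation at $1$ is a $\V$-module map. First I would confirm that $f \mapsto (f\varphi)$ genuinely defines an action of the commutative superalgebra $\A$: for $f, h \in \A$ one computes $(f(h\varphi))(g) = (-1)^{\p{f}\p{h\varphi}} (h\varphi)(fg) = (-1)^{\p{f}(\p{h}+\p{\varphi})}(-1)^{\p{h}\p{\varphi}}\varphi(hfg)$, and since $\A$ is commutative this should match $((fh)\varphi)(g) = (-1)^{\p{fh}\p{\varphi}}\varphi(fhg)$ after collecting the sign factors; here the sign $(-1)^{\p{f}\p{\varphi}}$ is precisely what is needed so that the parity of $f\varphi$ is $\p{f}+\p{\varphi}$ and the associativity signs cancel. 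Next I would check that $\eta \mapsto (\eta\varphi)$ is a representation of $\V$, i.e. that $[\eta,\tau]\varphi = \eta(\tau\varphi) - (-1)^{\p\eta\p\tau}\tau(\eta\varphi)$, by expanding both sides using the defining formula; the four resulting terms involving $\eta\tau\varphi(g)$-type expressions and $\varphi(\eta\tau(g))$-type expressions should reorganize into the bracket acting on both $M$ and on the argument $g$, with the signs matching because $\varphi$ carries parity $\p\varphi$ past $\eta$ and $\tau$.

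The heart of the verification is the Leibniz compatibility of the two actions, namely the identity from the definition of $\AV$-module:
\begin{equation*}
(\eta(f\varphi))(g) = ((\eta f)\varphi)(g) + (-1)^{\p\eta\p f}(f(\eta\varphi))(g).
\end{equation*}
I would expand the left side as $\eta\big((f\varphi)(g)\big) - (-1)^{\p\eta(\p f+\p\varphi)}(f\varphi)(\eta(g))$ and substitute $(f\varphi)(g) = (-1)^{\p f\p\varphi}\varphi(fg)$, then use that $\eta$ acts on $M$ and that the module action already satisfies the Leibniz rule $\eta(\varphi(fg))$ is governed by how $\varphi$ interacts with $\eta$. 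The key move is to write $\eta(fg) = \eta(f)g + (-1)^{\p\eta\p f} f\,\eta(g)$ inside the argument and track the accumulated signs; the term producing $\eta(f)$ in the argument yields $(\eta f)\varphi$ and the remaining terms recombine into $(-1)^{\p\eta\p f}f(\eta\varphi)$. Part (b) is then short: one checks $\pi(\eta\varphi) = (\eta\varphi)(1) = \eta\varphi(1) - (-1)^{\p\eta\p\varphi}\varphi(\eta(1))$, and since $\eta(1) = 0$ because $\V$ acts on $\A$ by derivations, this reduces to $\eta\,\pi(\varphi)$, proving $\pi$ is a $\V$-homomorphism.

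I expect the main obstacle to be bookkeeping of the Koszul signs rather than any conceptual difficulty: every step is a routine but sign-sensitive computation, and the delicate point is ensuring the sign $(-1)^{\p f\p\varphi}$ in the $\A$-action is consistent with the sign $(-1)^{\p\eta\p\varphi}$ in the $\V$-action so that the Leibniz identity closes up exactly. I would therefore carry out each computation by assigning definite parities to $f, \eta, \varphi, g$ and comparing sign exponents modulo $2$ termwise, which removes all ambiguity.
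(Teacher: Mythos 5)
Your proposal is correct and takes essentially the same approach as the paper: the paper's entire proof is the remark that the direct verification of Proposition 4.3 in \cite{BF} generalizes to the super setting in a straightforward way, which is precisely the Koszul-sign bookkeeping you outline. Your key steps do close up as claimed (in particular, the Leibniz compatibility follows, as you say, by expanding $\eta(fg)=\eta(f)g+(-1)^{\p{\eta}\p{f}}f\,\eta(g)$ inside the argument of $\varphi$, and part (b) reduces to $\eta(1)=0$), so your plan is a correct fleshing-out of what the paper leaves implicit.
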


The proof given in Proposition 4.3 in \cite{BF}, generalizes to super setting in a straightforward way.

The coinduced module is too big for our purposes, it is not even a weight module. It does have a maximal weight submodule
$$ \mathop\bigoplus_{\beta \in \h^*} \Hom (\A, M)_\beta,$$
where
$$ \Hom (\A, M)_\beta = \left\{ \varphi \in \Hom (\A, M) \, | \, \forall \alpha \in \h^* \ \varphi(\A_\alpha) \subset M_{\alpha+\beta} \right\}.$$

Still, this weight submodule is also too big -- it is not cuspidal when $M$ is. To remedy this situation we define the {\it $\AV$-cover}
$\hM$ of a $\V$-module as a subspace in $\Hom (\A, M)$:
$$\hM = \Span \{ \psi(\tau, m) \, | \, \tau\in\V, m\in M \},$$
where $\psi(\tau, m)$ is defined as
$$\psi(\tau, m) g = (-1)^{(\p{\tau} + \p{m})\p{g}} (g \tau) m .$$
Again, a generalization of the argument of Proposition 4.5 in \cite{BF} to super setting, shows that $\hM$ is a weight $\AV$-module. 

The map $\hM \rightarrow M$, $\psi(\tau, m) \mapsto \tau m$, is a homomorphism of $\V$-modules with the image $\V M$.

\begin{theorem}
\label{cusp}
If $M$ is a cuspidal (resp. strongly cuspidal) $\V$-module then $\hM$ is a cuspidal (resp. strongly cuspidal) $\AV$-module.
\end{theorem}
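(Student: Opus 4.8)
The plan is to reduce the statement to the single assertion that $\hM$ is finitely generated as an $\A$-module, from which uniform boundedness of the weight multiplicities follows by a direct count. First I would record that the spanning map $\psi$ is $\A$-linear in its first argument, $\psi(f\tau,m)=f\,\psi(\tau,m)$ (a short computation with the sign conventions), so that, since $\V=W(m,n)$ is a free $\A$-module on $d_1,\dots,d_m,\pd_1,\dots,\pd_n$ (together with $d_0$ when $\V=\Wz$), one has $\hM=\sum_i\A\,\psi(d_i,M)+\sum_\alpha\A\,\psi(\pd_\alpha,M)$. Next I would check that $\psi$ is bihomogeneous: if $\tau$ and $m$ are weight vectors then $\psi(\tau,m)$ has weight $\mathrm{wt}(\tau)+\mathrm{wt}(m)$. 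Granting finite generation over $\A$ by weight vectors $v_1,\dots,v_N$, every weight space $\hM_\beta$ is spanned by the monomial multiples $t^s\xi^q v_j$ of weight $\beta$; since a monomial of $\A$ is determined by its $\hp$-weight up to the $2^n$ choices of $\xi^q$ (and its full $\h$-weight pins down $\xi^q$ as well, via the $\hpp$-grading), this yields $\dim\hM_\beta\le 2^n N$ uniformly in the strong case and $\dim\hM_\beta\le N$ in the $\h$-graded case. Thus finite generation over $\A$ gives (strong) cuspidality, handling both claims at once.

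The heart of the proof is therefore to show $\hM$ is finitely generated over $\A$, and here the hypothesis that $M$ is (strongly) cuspidal is essential. Working from the $\AV$-structure I would first derive the commutation identity $\eta\,\psi(\tau,m)=\psi([\eta,\tau],m)+(-1)^{\p{\eta}\p{\tau}}\psi(\tau,\eta m)$ for $\eta\in\V$, obtained by evaluating both sides on $g\in\A$ and using $[\eta,g\tau]=\eta(g)\tau+(-1)^{\p{\eta}\p{g}}g[\eta,\tau]$. Specialising $\eta=t^a d_j$, $\tau=d_i$ and using $[t^a d_j,d_i]=-a_i t^a d_j$ gives
$\psi(d_i,(t^a d_j)m)=(t^a d_j)\,\psi(d_i,m)+a_i t^a\,\psi(d_j,m)$,
with analogous relations involving $\pd_\alpha$; these express the generator attached to the shifted vector $(t^a d_j)m$ through the $\V$-action on a generator attached to $m$, together with an $\A$-multiple of another generator. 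Iterating such relations, the generators $\psi(d_i,m)$ with $m$ ranging over all of $M$ should be reduced to those with $m$ lying in finitely many $\hp$-weight spaces. The mechanism that forces the reduction to terminate is the uniform bound on $\dim M_\mu$: equivalently, on a cuspidal module the operators $t^r\xi^q d_i$ depend polynomially on $r$ (the structural fact underlying the polynomial expansions (\ref{E1})--(\ref{E3}), cf.\ \cite{B2}), so that each function $\psi(d_i,m)\colon t^r\xi^q\mapsto\pm(t^r\xi^q d_i)m$ is polynomial-valued in $r$ of bounded degree and hence lies in a finitely generated $\A$-submodule of $\Hom(\A,M)$.

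The main obstacle is precisely this finiteness step: a priori $\hM$ carries an infinite family of generators $\psi(d_i,m)$, one for every $m$ and every weight, and nothing short of the uniform multiplicity bound rules out their contributing independently across infinitely many cosets. I expect the cleanest route to be to fix (after passing to an indecomposable summand) a single coset $\lambda+\Z^m$ of the support, to run the reduction by induction on a height function measuring the distance of $\mathrm{wt}(m)$ from a base region, and to read off termination from polynomiality. The super-specific bookkeeping — tracking the finitely many Grassmann monomials $\xi^q$, the parities in the commutation identity, and the extra generators $\psi(\pd_\alpha,m)$ governed by relations (\ref{R4})--(\ref{R6}) — is routine once the torus direction is controlled, since $\La$ is finite-dimensional and contributes only the harmless factor $2^n$ already absorbed in the count above. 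Finally, since the same argument applies verbatim with $\hp$-weights in place of $\h$-weights, the strongly cuspidal case follows in parallel, completing the proof.
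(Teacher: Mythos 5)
Your preliminary reductions are correct, and they in fact mirror the architecture of the paper's own proof: $\psi(f\tau,m)=f\,\psi(\tau,m)$ does hold, $\psi$ is bihomogeneous in the weights, and finite generation of $\hM$ over $\A$ by weight vectors does yield the uniform multiplicity bound with the harmless $2^n$ factor. Indeed, the paper's key claim --- that $\hM_{\lambda+k}$ is spanned by $\psi(t^{k-s}\xi^r d_j, M_{\lambda+s})$, $\psi(t^{k-s}\xi^r \pd_\alpha, M_{\lambda+s})$, $\psi(t^{k-s}\xi^r d_0, M_{\lambda+s})$ with $\left\| s \right\| \leq N/2$ --- is precisely finite generation over $\R_m$ by a finite-dimensional space of weight vectors, so your framing is faithful to what actually gets proved. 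Your commutation identity $\eta\,\psi(\tau,m)=\psi([\eta,\tau],m)+(-1)^{\p{\eta}\p{\tau}}\psi(\tau,\eta m)$ is also correct.

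The genuine gap is the finiteness step, which is the entire content of the theorem, and the tool you invoke for it is not available. The assertion that ``on a cuspidal module the operators $t^r\xi^q d_i$ depend polynomially on $r$'' has no meaning for a bare $\V$-module: $t^r\xi^q d_i$ maps $M_\mu$ to $M_{\mu+r}$, so for different $r$ these are maps between different spaces and do not form a polynomial family in any cited sense. The polynomiality statements (\ref{E1})--(\ref{E3}) and Theorem 3.1 of \cite{B2} concern finite-dimensional $(\A \# \V)_0$-modules, i.e., weight $\AV$-modules of finite rank --- and manufacturing such an $\AV$-module from a cuspidal $\V$-module is exactly what Theorem \ref{cusp} is needed for, so invoking them here is circular. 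The correct substitute is Proposition \ref{ann}: certain quadratic difference operators in $U(\V)$ annihilate every cuspidal $M$; deriving these for $W(m,n)$ from the $W(1)$ result (Corollary 3.4 of \cite{BF}) via the commutators $\Gamma(s,p,q)$ and the combinations $\Phi$ is the bulk of the paper's proof and is entirely absent from your proposal. Relatedly, your relation $\psi(d_i,(t^a d_j)m)=(t^a d_j)\psi(d_i,m)+a_i t^a\psi(d_j,m)$ cannot drive the reduction: its right-hand side involves the $\V$-action on $\hM$ rather than an $\A$-combination of generators, so iterating it says nothing about $\A$-finite generation. The relations that do work are the $\A$-linear relations among the $\psi$'s obtained by evaluating the operators of Proposition \ref{ann} on a vector $v$ with $d_i v=u$; solvability of $d_i v = u$ in turn requires normalizing the support representative $\lambda$ so that $\lambda(d_i)+s_i\neq 0$, a point your sketch never addresses. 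As written, the proposal is a plausible plan whose crucial step is asserted rather than proved.
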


The proof of this theorem is based on the following 
\begin{proposition}
\label{ann}
For any cuspidal $\V$-module $M$ there exists $N \in \N$ such that for all $p, q \in \Z^m$, $r \in \{0,1\}^n$,
$i, j = 1,\ldots, m$, $\alpha = 1, \ldots, n$, the following elements of $U(\V)$ annihilate $M$:
\begin{align}
\label{difA}
&\sum_{a=0}^N (-1)^a {N \choose a} (t^{p} t_i^a \xi^r d_j) (t_i^{q-a} d_i), \\
\label{difB}
&\sum_{a=0}^N (-1)^a {N \choose a} (t^{p} t_i^a \xi^r \pd_\alpha) (t_i^{q-a} d_i), \\
\label{difC}
&\sum_{a=0}^N (-1)^a {N \choose a} (t^{p} t_i^a \xi^r d_0) (t_i^{q-a} d_i).
\end{align}
\end{proposition}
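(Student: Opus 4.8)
The plan is to turn each of (\ref{difA})--(\ref{difC}) into a finite-difference identity for a matrix-valued polynomial, and then to produce the required polynomiality from the boundedness of weight multiplicities. Write $Y'$ for one of $d_j$, $\pd_\alpha$, $d_0$, and set $A_a = t^p t_i^a \xi^r Y'$ and $B_a = t_i^{q-a} d_i$. The first step is the weight bookkeeping: with respect to $\hp$ the element $A_a$ has weight $p + a\eps_i$ and $B_a$ has weight $(q-a)\eps_i$, so the product $A_a B_a$ has weight $p + q\eps_i$, \emph{independent of} $a$; the $\hpp$-weight is likewise fixed by $r$ and $Y'$. Hence for each weight vector $v \in M_\mu$ all the vectors $f_v(a) := A_a B_a v$ lie in a single finite-dimensional weight space $M_{\mu + p + q\eps_i}$. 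Since $\sum_{a=0}^N (-1)^a \binom{N}{a} f_v(a)$ is, up to sign, the $N$-th finite difference $\Delta^N f_v(0)$, it suffices to prove that $a \mapsto f_v(a)$ is the restriction to $\Z$ of a polynomial of degree at most some $D$ depending only on $M$ and not on $p,q,r,i,j,\alpha,v,\mu$: then any $N \ge D+1$ annihilates all of (\ref{difA})--(\ref{difC}) simultaneously.

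To establish this polynomiality I would restrict attention to the subalgebra $W^{(i)} = \Span\{ t_i^c d_i \mid c \in \Z \} \cong \Der \C[t_i^{\pm 1}]$ and to a single line $L = \bigoplus_{k\in\Z} M_{\mu + k\eps_i}$. Because $M$ is cuspidal, the spaces $M_{\mu+k\eps_i}$ have dimension bounded by a constant $K$, and $t_i^c d_i$ preserves $L$, so $L$ is a uniformly bounded $W^{(i)}$-module. By the structure theory of bounded modules over the Witt algebra, $L$ has a finite composition series, of length at most $K$, whose subquotients are tensor density (intermediate series) modules. On such a subquotient $t_i^c d_i$ acts on each weight vector by a scalar affine in $c$; consequently $B_a = t_i^{q-a} d_i$ acts, on each subquotient, by a scalar affine in $a$.

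It remains to track the transverse operator. The element $A_a$ carries its $a$-dependence only through the factor $t_i^a$, so with respect to the $W^{(i)}$-structure on the target line its matrix entries between two density subquotients should again be affine in $a$; here I would use the commutator relations (\ref{DD}) and (\ref{DV}) to compare $A_a$ with $A_0$ through repeated brackets with $t_i d_i$. Composing the affine-in-$a$ scalar coming from $B_a$ with the affine-in-$a$ matrix of $A_a$ shows $f_v(a)$ is polynomial in $a$: on the diagonal blocks the degree is at most $2$, and each of the at most $K$ filtration layers can raise it by a bounded amount, yielding a uniform bound $D$ expressed through $K$ alone.

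I expect the principal obstacle to be precisely this last step: proving that the transverse operator $A_a$ respects the $W^{(i)}$-filtration of source and target lines well enough that its $a$-dependence stays polynomial of bounded degree, and that the resulting bound $D$ is genuinely independent of the exponents $p,q,r$ (so that a single $N$ works for all of them at once). This bookkeeping is the super- and $d_0$-analogue of the corresponding estimate in \cite{BF}, and it is where the argument is most delicate.
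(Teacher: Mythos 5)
Your reduction of (\ref{difA})--(\ref{difC}) to a polynomiality statement is faithful: the sums are exactly $N$-th finite differences in $a$ of $f_v(a)=A_aB_av$, all values lie in one weight space, and vanishing for a uniform $N$ is equivalent to $f_v$ being polynomial of uniformly bounded degree. The rank-one part of your argument (the line $L=\bigoplus_k M_{\mu+k\eps_i}$ is a uniformly bounded $W^{(i)}$-module of finite length with density-module composition factors, on which $t_i^cd_i$ has entries polynomial in $c$) is essentially the content of Corollary 3.4 of \cite{BF} and is fine, although already there "affine on subquotients plus a bounded bump per layer" understates the work: the extension cocycles between density modules, not just the diagonal blocks, must be shown polynomial of bounded degree.

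The genuine gap is the step you yourself flag: the transverse operator. For $A_a=t^pt_i^a\xi^rY'$ with $Y'\notin W^{(i)}$ there is no a priori reason its matrix entries between subquotients of the source and target lines are affine, or even polynomial, in $a$ --- that assertion \emph{is} the hard content of the proposition, and your proposed tool does not deliver it. The bracket $[t_id_i,A_a]=(p_i+a)A_{a+1}-\delta_{ij}t^pt_i^{a+1}\xi^rd_i$ gives a recursion expressing $A_{a+1}$ through $\mathrm{ad}(t_id_i)A_a$ \emph{divided by} $(p_i+a)$; iterating produces denominators $(p_i)(p_i+1)\cdots(p_i+a-1)$ and rational, not polynomial, dependence, and it degenerates when $p_i+a=0$. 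The paper sidesteps this entirely by exploiting that the annihilator of $M$ in $U(\V)$ is $\mathrm{ad}(\V)$-stable: it takes the known annihilating differentiators $\Omega^{(\ell)}_{p,q}=\sum_a(-1)^a\binom{\ell}{a}(t_i^{p+a}d_i)(t_i^{q-a}d_i)$ of the Witt subalgebra $\mathrm{Span}\{t_i^kd_i\}$ (this is where \cite{BF} is invoked, and only there), brackets them with $t^s\xi^rd_j$, $t^s\xi^r\pd_\alpha$, $t^s\xi^rd_0$ to get new annihilating elements $\Gamma(s,p,q)$ that are sums of the desired mixed products plus unwanted terms, and then kills the unwanted terms by two explicit rounds of finite differencing in the parameters $(s,p,q)$, landing on (\ref{difA})--(\ref{difC}) with $N=\ell+2$; uniformity in $p,q,r,i,j,\alpha$ is then automatic. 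Without either this bootstrapping idea or an independent proof of polynomiality for operators outside $W^{(i)}$, your argument does not close.
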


\begin{proof}
We begin by establishing (\ref{difA}) with $i=j$. Consider a subalgebra in $\V$ spanned by elements $t_i^k d_i$, $k \in \Z$. This subalgebra is isomorphic to Witt algebra $W(1)$. With respect to the action of this subalgebra, $M$ decomposes into a direct sum of cuspidal $W(1)$-modules whose dimensions of weight spaces (with respect to $d_i$)  have the same bound over all direct summands. By Corollary 3.4 in \cite{BF}, there exists $\ell \in \N$ such that for all $p, q \in \Z$ the following elements of $U(W(1))$ 
annihilate $M$:
$$ \Omega^{(\ell)}_{p,q}  = \sum_{a=0}^\ell  (-1)^a {\ell \choose a} (t_i^{p+a} d_i) (t_i^{q-a} d_i).$$ 
Let $s \in \Z^m$ and $r \in \{0,1\}^n$. Then 
$\Gamma(s, p, q) = [t^s \xi^r d_i, \Omega^{(\ell)}_{p,q}]$ also annihilates $M$.
We have
\begin{align*}
\Gamma(s, p, q) = &\sum_{a=0}^\ell (-1)^a {\ell \choose a}
(p+a-s_i) (t^s t_i^{p+a} \xi^r d_i)  (t_i^{q-a} d_i) \\
+ &\sum_{a=0}^\ell (-1)^a {\ell \choose a}
(q-a-s_i)  (t_i^{p+a} d_i)  (t^s t_i^{q-a} \xi^r d_i) .
\end{align*}
Consider expression 
$$\Phi(s, p, q) = \Gamma(s + 2\eps_i, p, q-2) - 2  \Gamma(s + \eps_i, p, q-1) 
+ \Gamma(s, p, q),$$
which also annihilates $M$.
In this expression the second parts of $\Gamma$'s will cancel, yielding
\begin{align*}
\Phi(s, p, q) = &\sum_{a=2}^{\ell+2} (-1)^a {\ell \choose a-2}
(p + a - s_i - 4)  (t^s t_i^{p+a} \xi^r d_i)  (t_i^{q-a} d_i) \\
- 2 &\sum_{a=1}^{\ell+1} (-1)^{a-1} {\ell \choose a-1}
(p + a - s_i - 2)  (t^s t_i^{p+a} \xi^r d_i)  (t_i^{q-a} d_i) \\
+ &\sum_{a=0}^{\ell} (-1)^a {\ell \choose a}
(p + a - s_i)  (t^s t_i^{p+a} \xi^r d_i)  (t_i^{q-a} d_i) .
\end{align*}
Finally, consider $\Phi(s - \eps_i, p+1, q) - \Phi(s, p ,q)$, which gives us the following 
expression annihilating $M$:
\begin{align*}
&2 \sum_{a=0}^{\ell+2} (-1)^a 
\left(  {\ell \choose a-2} + 2  {\ell \choose a-1} +  {\ell \choose a} \right)
(t^s t_i^{p+a} \xi^r d_i)  (t_i^{q-a} d_i) \\
=&2 \sum_{a=0}^{\ell+2} (-1)^a  {\ell + 2 \choose a} 
(t^s t_i^{p+a} \xi^r d_i)  (t_i^{q-a} d_i) .
\end{align*}
This establishes (\ref{difA}) with $i=j$ and $N =\ell + 2$. The cases of (\ref{difA}) with $i \neq j$, 
(\ref{difB}) and (\ref{difC}) are actually slightly easier and follow exactly the same lines,
where we consider commutators $[t^s \xi^r d_j, \Omega^{(\ell)}_{p,q}]$,
 $[t^s \xi^r \pd_\alpha, \Omega^{(\ell)}_{p,q}]$, and  $[t^s \xi^r d_0, \Omega^{(\ell)}_{p,q}]$
as the starting points. 
% We consider 
% $\Gamma^\prime (s, p, q) = - [t^s \xi^r d_j, \Omega^{(\ell)}_{p,q}$. Then
% \begin{align*}
% \Gamma^\prime (s, p, q) =&
% s_i \sum_{a=0}^\ell (-1)^a {\ell \choose a}
% (t^s t_i^{p+a} \xi^r d_j)  (t_i^{q-a} d_i) \\
% + s_i &\sum_{a=0}^\ell (-1)^a {\ell \choose a}
% (t_i^{p+a} d_i)  (t^s t_i^{q-a} \xi^r d_j) .
% \end{align*}
% \end{proof}
% Forming expression
% $$\Phi^\prime (s, p, q) = \Gamma^\prime (s + 2\eps_i, p, q-2) - 2  \Gamma^\prime % (s + \eps_i, p, q-1) 
% + \Gamma^\prime (s, p, q),$$
% we eliminate the second sums:
% \Phi^\prime (s, p, q) = &\sum_{a=2}^{\ell+2} (-1)^a {\ell \choose a-2}
% (s_i - 2)  (t^s t_i^{p+a} \xi^r d_j)  (t_i^{q-a} d_i) \\
% - 2 &\sum_{a=1}^{\ell+1} (-1)^{a-1} {\ell \choose a-1}
% (s_i - 1)  (t^s t_i^{p+a} \xi^r d_j)  (t_i^{q-a} d_i) \\
% + &\sum_{a=0}^{\ell} (-1)^a {\ell \choose a}
% s_i  (t^s t_i^{p+a} \xi^r d_j)  (t_i^{q-a} d_i) .
% \end{align*}
\end{proof}

\begin{proof}
Now we can use Proposition \ref{ann} to give a proof of Theorem \ref{cusp}.
The proofs for the cuspidal and strongly cuspidal cases are virtually identical, so we will only consider the strongly cuspidal case.
Without loss of generality we may assume that $M$ is an indecomposable module. Then its support belongs to a single coset $\lambda + \Z^m \subset \hp^*$. We choose the coset representative $\lambda$ in such a way that for each $i = 1, \ldots, m$ either
$\lambda(d_i) = 0$ or $\lambda(d_i) \not\in\Z$. Clearly, the support of $\AV$-cover 
$\hM$ is the coset $\lambda + \Z^m$ and for $k \in \Z^m$ the weight space 
$\hM_{\lambda + k}$ is spanned by 
$\psi( t^{k-s} \xi^r d_j, M_{\lambda + s})$, $\psi( t^{k-s} \xi^r \pd_\alpha, M_{\lambda + s})$ 
and $\psi( t^{k-s} \xi^r d_0, M_{\lambda + s})$
with $s \in \Z^m$, $r \in \{ 0, 1\}^n$, $j = 1, \ldots, m$, $\alpha = 1, \ldots n$. We need to show that weight spaces of $\hM$ are in fact finite-dimensional and their dimensions are uniformly bounded.

Consider a norm on $\Z^m$, $\left\| s \right\| = \max\limits_i |s_i|$. We claim that 
$\hM_{\lambda + k}$ is spanned by
\begin{equation}
\label{span}
\left\{ \psi( t^{k-s} \xi^r d_j, M_{\lambda + s}) , \ \psi( t^{k-s} \xi^r \pd_\alpha, M_{\lambda + s}),
 \psi( t^{k-s} \xi^r d_0, M_{\lambda + s})  \ | \ \left\| s \right\|  \leq \frac{N}{2} \right\},
\end{equation}
where $N$ is the constant from Proposition \ref{ann}. 

Let us show by induction on $|s_1| + \ldots + |s_m|$  that elements \break
$\psi( t^{k-s} \xi^r d_j, M_{\lambda + s})$,
$\psi( t^{k-s} \xi^r \pd_\alpha, M_{\lambda + s})$,
and  $\psi( t^{k-s} \xi^r d_0, M_{\lambda + s})$
belong to the span of (\ref{span}).

If $\left\| s \right\|  \leq N/2$ then there is nothing to prove. Otherwise
there exists $i$ such that $|s_i| > N/2$. Let us assume $s_i > N/2$, the case $s_i <  -N/2$ is analogous. Let $u$ be an arbitrary vector from $M_{\lambda + s}$. It follows from our assumptions that $\lambda(d_i) + s_i \neq 0$. Hence there exists 
$v \in M_{\lambda + s}$ such that $d_i v = u$.

Let us show that 
\begin{align*}
&\psi( t^{k-s} \xi^r d_j, u) = - \sum_{a=1}^N (-1)^a {N \choose a}
\psi( t^{k-s} t_i^a \xi^r d_j, (t_i^{-a}d_i) v), \\
&\psi( t^{k-s} \xi^r \pd_\alpha, u) = - \sum_{a=1}^N (-1)^a {N \choose a}
\psi( t^{k-s} t_i^a \xi^r \pd_\alpha, (t_i^{-a}d_i) v), \\
&\psi( t^{k-s} \xi^r d_0, u) = - \sum_{a=1}^N (-1)^a {N \choose a}
\psi( t^{k-s} t_i^a \xi^r d_0, (t_i^{-a}d_i) v).
\end{align*}
Indeed, this follows immediately from the fact that operators from Proposition \ref{ann}
annihilate vector $v$. Since vectors $ (t_i^{-a}d_i) v$ have weights $\lambda + s - a \eps_i$ with $a = 1, \ldots, N$, by induction assumption the right hand sides in the above 
equalities belong to the span of (\ref{span}). This proves our claim, which now easily
provides a uniform bound on the dimensions of weight spaces of $\hM$.
\end{proof}

To prove Theorem \ref{quot} we consider the projection $\pi: \ \hM \rightarrow M$ given
by Lemma \ref{coind}(b), $\pi (\psi(\tau, u)) = \tau (u)$. Since $M$ is a simple module with a non-trivial action of  $\V$, this map is surjective.
By Theorem \ref{cusp}, the $\AV$-cover $\hM$ is strongly cuspidal, hence it has a Jordan-H\"older series 
$$ (0) = \hM_0 \subset \hM_1 \subset \hM_2 \subset \ldots \subset \hM_s = \hM$$
with the quotients that are strongly cuspidal simple $\AV$-modules.
Choosing the largest $k$ with $\hM_{k-1} \subset \Ker \pi$, $\hM_{k} \not\subset \Ker \pi$,
we obtain a surjective $\V$-module homomorphism from a simple strongly cuspidal $\AV$-module $\hM_k / \hM_{k-1}$ onto $M$.  By Theorem \ref{tens}, a simple strongly cuspidal $\AV$-module is a tensor module, which completes the proof of Theorem \ref{quot}.

\begin{remark}
We point out that a simple tensor $\AV$-module for $\V = \Wz$ with $\sd \neq 0$ remains simple as a $\V$-module, since in this case the action of $\A$ is encoded by $\A d_0$, hence any $\V$-submodule is invariant under the action of $\A$.
\end{remark}

\section{$W(m+1, n)$-modules of the highest weight type}

Consider a $\Z$-grading on $\V = W(m+1, n)$ by the eigenvalues of $\ad (d_0)$. The zero component
of this grading is $\V_0 = \Wz$. This grading induces a triangular decomposition 
$$ \V = \V_- \oplus \V_0 \oplus \V_+.$$

Let $T$ be a strongly cuspidal $\V_0$-module. We let $\V_+$ act on $T$ trivially and define a generalized Verma module $M(T)$ as the induced module
$$ M(T) = \Ind_{\V_0 \oplus \V_+}^{\V} T \cong U(\V_-) \otimes T.$$
The adjoint action of $\{d_0, d_1, \ldots, d_m\}$ induces a $\Z^{m+1}$-grading on $M(T)$ and every $\V$-submodule in $M(T)$ is homogeneous with respect to this grading.
It is clear that for $m > 0$ the generalized Verma module $M(T)$ is not Harish-Chandra. 

We define the radical $M^{rad}$ of $M(T)$ as a (unique) maximal $\V$-submodule intersecting with $T$ trivially,
and we set $L(T)$ to be the quotient 
$$L(T) = M(T) / M^{rad}.$$

\begin{theorem}
Let $\V = W(m+1, n)$ and let $T$ be a strongly cuspidal module for $\V_0 = \Wz$ such that 
$\V_0 T = T$. Then $L(T)$ is a strongly cuspidal $\V$-module.
\end{theorem}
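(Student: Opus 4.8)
The plan is to bound the weight multiplicities of $L(T)$ relative to $\hp = \Span\{d_0,d_1,\dots,d_m\}$ by combining the $\Z$-grading with the defining property of the radical. Write $L(T) = \bigoplus_{k\ge 0} L_{[-k]}$ for the grading induced by $\ad(d_0)$, with $L_{[0]} = T$, and let $\V_{\pm 1}$ denote the components of degree $\pm 1$. Since $d_0$ commutes with $W(m,n)$ and with $\A d_0$, it is central in $\V_0 = \Wz$ and hence acts on each indecomposable summand of the cuspidal module $T$ by a single scalar $\lambda_0$; consequently $d_0$ acts by $\lambda_0 - k$ on $L_{[-k]}$, so the $d_0$-eigenvalue records the level $k$. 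A weight of $L(T)$ is thus a pair $(\lambda_0 - k,\mu)$ with $\mu$ a weight of $L_{[-k]}$ for the Cartan $\Span\{d_1,\dots,d_m\}$ of $W(m,n)$, and it suffices to bound $\dim (L_{[-k]})_\mu$ uniformly over $k \ge 0$ and $\mu \in \lambda + \Z^m$.

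First I would record the non-degeneracy forced by the radical. If $k \ge 1$ and $v \in L_{[-k]}$ is annihilated by all of $\V_+$, then $U(\V) v = U(\V_-)U(\V_0)v$ is concentrated in degrees $\le -k$, so the submodule it generates meets $T = L_{[0]}$ trivially; since $L(T) = M(T)/M^{rad}$ and $M^{rad}$ is the largest submodule meeting $T$ trivially, this forces $v = 0$. As $\V_+$ is generated by its lowest graded components (by $\V_{+1}$ alone when $m \ge 1$, together with $\V_{+2}$ in the rank-one case), the raising map $v \mapsto (y \mapsto yv)$ is therefore an injection $L_{[-k]} \hookrightarrow \Hom(\V_{+1}, L_{[-k+1]})$, and it intertwines the $\V_0$-actions once the target is given the coinduced action in the spirit of Lemma \ref{coind}. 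Iterating this step realizes each $L_{[-k]}$, compatibly with $\V_0$, as a subspace of a cover-type space assembled from $T$.

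The substance of the proof is to show that these embeddings land in a finite part. Because $T$ is strongly cuspidal over $\Wz$, Proposition \ref{ann} furnishes an integer $N$ and difference operators in the commuting directions $d_1,\dots,d_m$ that annihilate $T$. I would push these relations through the raising maps exactly as in the proof of Theorem \ref{cusp}: after normalizing the coset representative so that $\lambda(d_i) = 0$ or $\lambda(d_i) \notin \Z$ for each $i$, the hypothesis $\V_0 T = T$ guarantees that $d_i$ acts invertibly on every weight space of nonzero $i$-th coordinate, and each image functional of large weight in a coordinate direction can then be rewritten in terms of functionals supported on a box of size $O(N)$. This caps the support of each functional $\varphi_v$ in the $d_1,\dots,d_m$ directions and yields $\dim(L_{[-k]})_\mu < \infty$; in particular $L(T)$ is a strong Harish-Chandra module.

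The main obstacle is to upgrade level-by-level finiteness to a bound uniform in $k$. The reduction above caps the weight of each individual $\V_{-1}$-factor by $O(N)$, but a priori permits the number of such factors, and hence the number of admissible configurations, to grow with the level. To force uniformity I would use the non-degeneracy of the raising maps to detect every $v \in L_{[-k]}$ by a single raising operator whose weight is again bounded by $O(N)$ (once more via Proposition \ref{ann}), so that the iterated embedding factors through a bounded-weight subspace of $U(\V_+)_{[k]}$ whose $\V_0$-multiplicities are controlled by those of $T$ independently of $k$. Establishing this telescoping --- that the finite data governing consecutive levels stabilizes --- is the delicate point on which the uniform bound rests, and it is where I expect the real work to lie.
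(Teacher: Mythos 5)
Your proposal contains a genuine gap, and you have in effect named it yourself: the entire content of the theorem is concentrated in the step you defer (``establishing this telescoping \ldots is where I expect the real work to lie''). The non-degeneracy observation is correct --- $L(T)$ has no nonzero $\V_+$-singular vectors in negative degrees --- but the resulting embedding $L_{[-k]} \hookrightarrow \Hom(\V_{+1}, L_{[-k+1]})$ lands in a space whose $\hp$-weight spaces are infinite-dimensional, so by itself it bounds nothing. The claim that Proposition \ref{ann} ``caps the support'' of the image functionals is asserted rather than proved: Proposition \ref{ann} produces elements of $U(\V_0)$ annihilating the \emph{cuspidal} module $T$, and you do not explain why such relations control the unknown $\V_0$-modules $L_{[-k]}$, which are not known to be cuspidal (that is essentially what is being proved). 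Two further points. First, your use of the hypothesis $\V_0 T = T$ is a non sequitur: invertibility of $d_i$ on weight spaces with nonzero $i$-th coordinate follows from the weight normalization alone; the actual role of $\V_0 T = T$ is entirely different (see below). Second, the final goal you set --- a bound \emph{uniform in} $k$ --- is in general unattainable: simple quotients of generalized Verma modules have weight multiplicities growing with the level (already for the Witt algebra), and accordingly the paper's own proof establishes only that $L(T)$ is a \emph{strong Harish-Chandra} module; the phrase ``strongly cuspidal'' in the statement is evidently a slip, as only finiteness of weight spaces is used in the main classification.

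The paper's proof avoids any level-by-level analysis. It forms the cover $\hT$ of $T$ (with $\A^\prime = \R_m$, $\V^\prime = \Wz$), which is strongly cuspidal by Theorem \ref{cusp}; by Theorem \ref{cat}, $\hT$ is a \emph{polynomial} $\Z^m$-graded module. It then invokes Theorem 1.12 of \cite{BB}, which is precisely the statement that the simple quotient of a generalized Verma module induced from a polynomial module has finite-dimensional weight spaces (the non-super proof carries over). Finally, the hypothesis $\V_0 T = T$ guarantees that the canonical map $\hT \to T$, $\psi(\tau,u) \mapsto \tau u$, is surjective, so $T$ is a quotient of $\hT$, hence $L(T)$ is a quotient of $L(\hT)$ and therefore strong Harish-Chandra. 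What your sketch would require is, in effect, a reproof of this Berman--Billig theorem from scratch --- exactly the work your last paragraph leaves undone.
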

\begin{proof}
Consider an $\A^\prime \V^\prime$-cover $\hT$ of $T$ with $\A^\prime = \R_m$, $\V^\prime = \Wz$. By Theorem \ref{cusp}, $\hT$ is a strongly cuspidal module for $\Wz$. It follows from Theorem \ref{cat} that 
$\hT$ is a polynomial $\Z^m$-graded module (see \cite{BB} or \cite{BZ} for the definition of a polynomial module). By Theorem 1.12 in \cite{BB}, $L(\hT)$ is a strong Harish-Chandra module (although the results of
\cite{BB} are proved in non-super setting, all statements and their proofs extend to the super case).
Since $T$ is a quotient of $\hT$, it follows that $L(T)$ is a quotient of $L(\hT)$, and hence $L(T)$ is a strong Harish-Chandra module.
\end{proof}

Now we combine all of the results of this paper to establish our main Theorem \ref{tens-Introd}.
Let $M$ be a simple strong Harish-Chandra $W(m+1, n)$-module. By Theorem \ref{thm_Mazorchuk-Zhao},
$M$ is either strongly cuspidal or of the highest weight type. If $M$ is strongly cuspidal, by Theorem \ref{quot}
it is a quotient of a tensor $W(m+1, n)$-module, whose structure is given by Theorem \ref{tens}.

If $M$ is a simple module of the highest weight type, it is isomorphic to module $L(T)$, twisted by an automorphism $\theta \in GL_{m+1} (\Z)$. Here $T$ is a simple strongly cuspidal module for $\Wz$ described in Theorem \ref{tenz}.


\begin{thebibliography}{99}
%\bibitem{ACKP} E. Arbarello, C. De Concini, V.G. Kac, C. Procesi, \emph{Moduli spaces of curves and representation theory}, Comm.Math.Phys., {\bf 117} (1988), 1-36.

\bibitem{BB} S. Berman and Y. Billig, \emph{Irreducible Representations for Toroidal Lie Algebras}, Jour. Alg. {\bf 221} (1999), 188-231.

% \bibitem{B1} Y. Billig, \emph{Representations of the twisted Heisenberg-Virasoro algebra at level zero}, Canad. Math. Bull., {\bf 46} (2003), no.4, 529-537.

\bibitem{B2} Y.~Billig,
\emph{Jet modules},
Canad. J. Math, {\bf 59} (2007), 712-729.

\bibitem{B3} Y.~Billig,
\emph{Representation theory of $\mathbb{Z}^n$-graded Lie algebras,} 
S\~ao Paulo Journal of Mathematical Sciences, {\bf 11} (2017), 53-58.

\bibitem{BF}  Y.~Billig, V.~Futorny,  
\emph{Classification of irreducible representations of Lie algebra of vector fields on a torus}, 
J. Reine Angew. Math., {\bf 2016} (2016), 199-216.

\bibitem{BZ} Y. Billig and K. Zhao,
\emph{Weight modules over exp-polynomial Lie algebras}, 
Jour. Pure and Appl. Alg. {\bf 191} (2004), 23--42.

\bibitem{CK} S.-J. Cheng and V.G. Kac, \emph{
Erratum: Conformal modules}, Asian J. Math. 2 (1998), 153-156.

% \bibitem{LZ} R.Lu, K Zhao, \emph{Classification of irreducible weight modules over the twisted Heisenberg-Virasoro algebra}, Communications in Contemporary Mathematics, {\bf 12} (2010), 183-205.

% \bibitem{MP1} C. Martin and A. Piard, \emph{Nonbounded Indecomposable Admissible Modules over the Virasoro Algebra}, Lett. Math. Phys. {\bf 23} (1991), 319-324.

\bibitem{MP2} C. Martin and A. Piard,  \emph{Classificaion of the Indecomposable Bounded Admissible Modules over the Virasoro Lie algebra with weight spaces of dimension not exceeding two}, Commun. Math. Phys. {\bf 150} (1992), 465-493.

\bibitem{MZ} C.Martinez, E.Zelmanov, \emph{Brackets, superalgebras and spectral gap},    São Paulo J. of Math. Sci. 13 (2019), 112–132.

\bibitem{MZh} V. Mazorchuk and K. Zhao, \emph{Supports of Weight Modules over Witt Algebras}, Proc. Royal Soc. Edinburgh {\bf 141A} (2011), 155-170.

\bibitem{Ma} O.~Mathieu, 
\emph{Classification of Harish-Chandra modules over the Virasoro algebra}, 
Invent. Math., {\bf 107} (1992), 225-234.

\bibitem{R}
G.~S.~Rinehart,
\emph{Differential forms on general commutative algebras},
Trans. Amer. Math. Soc., {\bf 108} (1963), 195-222. 

\bibitem{XL} 
Y.~Xu, R.~L\"u,
\emph{Simple weight modules with finite-dimensional weight spaces over Witt superalgebras},
preprint, arXiv:2001.04089 [math.RT].



\end{thebibliography}
\end{document}